\documentclass[11pt]{amsart}
\pdfoutput=1
\usepackage{latexsym, graphicx, epsfig, amsmath, amsfonts,amssymb}
\usepackage{multirow}
 \usepackage{diagbox}
\usepackage{color}
 \usepackage[percent]{overpic}
\usepackage[ruled,vlined,linesnumbered]{algorithm2e}
\usepackage{algpseudocode}
\usepackage{threeparttable}
\usepackage[T1]{fontenc}
\usepackage{booktabs}
\usepackage{hyperref}
\usepackage{caption}
\usepackage{subcaption}  
\usepackage{amsmath}
\usepackage{extarrows}
\usepackage{booktabs}     
\usepackage{siunitx}      
\usepackage{caption}  
\usepackage{amssymb}
\usepackage{epstopdf,mathtools,bbm}
\usepackage[left=1.25in,right=1.25in,top=1in]{geometry}

\usepackage{tikz}
\usetikzlibrary{calc}
\usepackage[percent]{overpic}
\usepackage{amsthm} 
\usepackage{mathabx}
\usetikzlibrary{arrows.meta}

\usepackage{mathrsfs}
\newcounter{Rownumber} 
\usepackage{multirow}
\usepackage{tabularx}
\usepackage{multirow}
\usepackage{makecell}
\usepackage{booktabs}
\usepackage{multirow}
\usepackage[font=tiny,labelfont=bf]{caption}
\usepackage{booktabs}
\usepackage{multirow}
\usepackage{mathrsfs}
\usetikzlibrary{arrows.meta,calc,decorations.pathmorphing,decorations.markings}
\usepackage{hyperref}
\newenvironment{keyword}
{\par\vspace{0.5em}\noindent\textbf{Keywords:}\ }
{\par\vspace{0.5em}}

\newtheorem{theorem}{Theorem}[section]

\newtheorem{corollary}{Corollary}[section]

\newtheorem{remark}{Remark}

\makeatletter

\newtheoremstyle{condstyle}%
  {\abovedisplayskip}{\belowdisplayskip}
  {}
  {}
  {\bfseries}
  {}
  {0.5em}
  {\thmname{#1}\ \thmnumber{#2.}\thmnote{\ \textnormal{(#3)}}}

\theoremstyle{condstyle}

\theoremstyle{plain} 
\title{Inverse  scattering for  three-dimensional random obstacles with multi-frequency data}

\author[Z. Sun]{Zhiqi Sun$^{1}$}
\author[Y. Lin]{Yiwen Lin$^{2}$}

\begin{document}
\graphicspath{figures/}
\maketitle

\begingroup
\renewcommand{\thefootnote}{\arabic{footnote}}
\footnotetext[1]{School of Mathematical Sciences, Shanghai Jiao Tong University,
Shanghai 200240, People's Republic of China. Email: \texttt{sunzhq1016@sjtu.edu.cn}.}
\footnotetext[2]{School of Mathematical Sciences, Shanghai Jiao Tong University,
Shanghai 200240, People's Republic of China. Email: \texttt{linyiwen@sjtu.edu.cn}.}
\endgroup

\begin{abstract}

In many practical scenarios the shapes of scatterers exhibit uncertain geometric variations arising from diverse physical or environmental factors. For inverse scattering problems which are inherently ill-posed, the presence of such geometric uncertainties may have a non-negligible impact on the recovery process. With the aim of recovering both  obstacle geometry and  statistics of the shape uncertainties, in this paper we study an inverse acoustic scattering problem for three-dimensional smooth star-shaped obstacles with random isotropic fluctuations. We propose an efficient Monte Carlo-based multi-frequency recursive linearization algorithm in which the far-field operator is linearized with respect to the geometry parameters and frequency continuation is employed to  recover the unknown geometry from coarse to fine scales. Based on the reconstructed samples, we further estimate the reference
geometry and key statistics of the shape fluctuation field including  Karhunen--Lo\`eve eigenvalues, covariance hyper-parameters for
Gaussian perturbations and covariance structure, representative marginal
distributions for  non-Gaussian perturbations. We also prove that the probability law of the  far-field data uniquely determines the   radial function in distribution which implies uniqueness of the reference shape and related statistics. Numerical experiments demonstrate the effectiveness of the proposed method in recovering both the scatterer shapes and the associated statistical information under Gaussian and non-Gaussian random  variations.

\end{abstract}

\begin{keyword}
Inverse  random obstacle scattering,  Helmholtz equation, multi-frequency data,   recursive linearization, uncertainty quantification.
\end{keyword}

\pagestyle{myheadings}
\thispagestyle{plain}
\markboth{}
{ Inverse Scattering Problem of Three-dimensional Random  Obstacle}

\section{Introduction}

Inverse obstacle scattering problems have important applications in fields such as geophysical exploration \cite{colton1998inverse}, optical design \cite{Bao2001MathematicalModeling} and medical imaging \cite{Kuchment2014}. In these scenarios, the inverse problem typically aims to recover the shape of an unknown obstacle from measured scattering data. In practice, however, the geometry of the real obstacle is often affected by factors such as manufacturing tolerances, machining deviations, surface contamination, corrosion and wear, deposit accumulation, material fatigue and environmental loads \cite{wang2020review}. Such effects induce random departures of the geometry from its ideal shape, resulting in uncertain, non-negligible perturbations and  consequent influence on the shape recovery which  is intrinsically ill-posed. A quantitative characterization of these random fluctuations provides useful information on the variability of the scatterer and is important for assessing the reliability of the reconstructed geometry. Therefore, beyond reconstructing the obstacle geometry itself, it is desirable to further estimate the statistical features of the underlying shape fluctuation field from the measured data.  In general, such uncertain shape variations are typically small relative to the size of the scatterer itself: rather than modifying the overall geometry, they manifest primarily as local fluctuations or slight deviations from its original shape. Based on this observation, this paper does not address arbitrary large-scale random deformations. Instead, we focus on the reconstruction of three-dimensional scatterers with small  shape perturbations, together with the recovery of statistical parameters associated with the underlying fluctuation field.

  Research on inverse problems with uncertainties have received increasing attention in recent years, related studies include inverse random source problems \cite{MR3120587,MR3565588,MR4653392} and inverse random potential problems \cite{MR2386724,MR4712402,MR4918624}, etc. For inverse random shape problems, existing works have mainly focused on the reconstruction of one-dimensional  gratings; see, for example, \cite{MR4164073,MR4693214,MR5072888}. More recently, inverse scattering problems for two-dimensional random obstacles have  been investigated in \cite{sun2026inverse}.  For the recovery of three-dimensional  obstacle with uncertainties,  limited studies are available.   D\"olz et al. \cite{MR4337758} investigated this problem within a Bayesian framework based on a random deformation model and reported the posterior expected shape and variance of the random scatterer in the single-frequency setting. A more direct characterization of the statistical structure of the underlying geometric fluctuations from inverse measurements, however, was less emphasized in previous study.

To fill this gap, this work studies inverse acoustic scattering by three-dimensional star-shaped scatterers with small-amplitude smooth random perturbations, aiming to recover not only the obstacle geometry but also  statistics of the underlying shape uncertainties.
 The main geometry of the scatterer  is represented by a deterministic reference shape together with
a random boundary fluctuation modeled by an isotropic random field defined on the unit
sphere $\mathbb{S}^2$.  To achieve our goals, we propose an efficient Monte Carlo-based multi-frequency recursive linearization algorithm for the shape recovery: the key idea is to reformulate the uncertain obstacle recovery into a collection of deterministic inversion problems with individual realizations. This strategy connects the present formulation with deterministic  shape reconstruction, for which representative methods include \cite{MR883771,MR1181580,MR2363787,MR4929108}. For each  realization, we linearize the far-field operator around the current approximate geometry and determine the shape correction  via the Gauss–Newton method. A multi-frequency continuation strategy is then employed where low-frequency data recover the overall outline of the scatterer and higher-frequency data are gradually incorporated to resolve finer details. Based on the Colton--Sleeman uniqueness theorem \cite{MR729385}, we show that within our  random scattering setting, the distribution of the random radial function is uniquely determined by the probability law of the far-field data. Under the zero-mean perturbation assumption, this implies that the reference shape and the covariance structure of the associated random perturbation field are uniquely determined.
The numerical results demonstrate the effectiveness of the proposed method for both shape reconstruction and statistical recovery: for Gaussian perturbations, the dominant Karhunen--Lo\`eve (KL) eigenvalues and the covariance hyper-parameters, including the perturbation amplitude $\sigma$ and correlation length $\ell_c$, are quantitatively estimated; for non-Gaussian perturbations, the reconstructed samples capture representative marginal distributions and spatial covariance structures. These results indicate that the proposed method is capable of recovering statistical features of random boundaries beyond deterministic shape reconstruction. In particular, the comparisons with single-frequency inversions show that the multi-frequency continuation strategy provides a more reliable balance between stability and resolution, which is important for the subsequent recovery of statistical quantities from the reconstructed samples. To the best of our knowledge, this work is among the first attempts to recover both three-dimensional random obstacle shapes and the associated statistical structure of the underlying random field from far-field measurements. The main contributions of this work are summarized as follows:

\begin{itemize}
\item A unified and tractable radial model of three-dimensional random star-shaped obstacles is formulated for the inverse acoustic scattering problem, providing a flexible framework to represent and quantify shape uncertainties arising from Gaussian and non-Gaussian perturbations. 
\item We propose a Monte Carlo-based multi-frequency recursive linearization method for reconstructing three-dimensional random scatterers, integrating realization-wise inversion with frequency continuation to improve reconstruction stability and resolution while enabling reliable statistical estimation.

\item Beyond shape reconstruction, the reconstructed samples enable quantitative estimation of the statistical quantities characterizing the underlying perturbation field.

\item We prove a statistical uniqueness result showing that the probability law of the random far-field data determines the stochastic radial function in distribution, and hence uniquely determines its mean and covariance, with the deterministic reference shape identified under the zero-mean perturbation assumption.

 \end{itemize}

The rest of this paper is organized as follows. In \autoref{Problem setup and preliminaries}, we introduce the problem setting and formulate the random star-shaped obstacle model. In \autoref{Reconstruction method}, we present the proposed reconstruction method, including the forward solver, the multi-frequency recursive linearization algorithm, the recovery of statistical quantities and a statistical uniqueness result. \autoref{Numerical_experiments} presents four classes of numerical experiments for both Gaussian and non-Gaussian perturbations to demonstrate the effectiveness of the proposed method. Finally, \autoref{Conclusion} concludes the paper with a summary of the main findings and possible future directions.

\section{Problem setup}\label{Problem setup and preliminaries}

\subsection{Random scattering problem}
Consider a bounded impenetrable scatterer in $\mathbb{R}^3$. In many practical scenarios, the shape of the scatterer often exhibits unavoidable
geometric uncertainty due to physical or environmental factors, hence it is  more appropriate to model the scatterer as a random obstacle rather than as a fixed deterministic object. Mathematically, let $(\Omega,\mathscr F,\mathbb P)$ be a probability space and let $D(\omega)\subset\mathbb R^3$ denote the realization of the random scatterer with $\omega\in\Omega$. We assume that for $\mathbb P$-almost every  $\omega$, $D(\omega)$ is bounded and has a sufficiently smooth boundary $\partial D(\omega)$. Given a time-harmonic incident acoustic plane wave field $u^i(x,d)=e^{ikx\cdot d}$ where $d\in\mathbb{S}^2$ is an incident direction and $k>0$ is the wavenumber, the forward scattering problem for the realization $D(\omega)$ is to determine the scattered field $u^s(x,\omega;d,k)$. For each fixed $\omega$, the scattered field satisfies the Helmholtz equation
\begin{equation}\label{Helmholtz}
    \Delta u^s(x,\omega;d,k)+k^2u^s(x,\omega;d,k)=0,\quad x\in\mathbb{R}^3\backslash\widebar{D(\omega)}.
\end{equation}
The total field $u(x,\omega;d,k)=u^i(x,d)+u^s(x,\omega;d,k)$ is subject to the Dirichlet boundary condition:
\begin{equation}\label{Dirichlet}
    u(x,\omega;d,k)=u^i(x,d)+u^s(x,\omega;d,k)=0 \quad \text{on }\partial D(\omega).
\end{equation}
To ensure the outgoing property of $u^s$, the Sommerfeld radiation condition is introduced:
\begin{equation}\label{sommer}
    \lim_{r\rightarrow\infty}r\bigg(\frac{\partial u^s}{\partial r}(x,\omega;d,k)-iku^s(x,\omega;d,k)\bigg)=0, \quad r=|x|.
\end{equation}
Under the radiation condition \eqref{sommer}, the scattered field $u^s$ possesses the asymptotic behavior
\begin{equation}\label{far}
    u^s(x,\omega;d,k)=\frac{\mathrm{e}^{\mathrm{i} k r}}{r}\left\{u^{\infty}(\widehat{x},\omega;d,k)+\mathcal{O}\left(\frac{1}{r}\right)\right\},\quad r=|x| \rightarrow \infty,
\end{equation}
where $\widehat{x}=\frac{x}{|x|}\in\mathbb{S}^2=\{x\in\mathbb{R}^3:|x|=1\}$ is the observation direction and the far-field pattern $u^\infty(\widehat{x},\omega;d,k)$ is an analytic function of $\widehat{x}$. In the present setting, for $\mathbb P$-almost every
$\omega\in\Omega$, the forward scattering problem \eqref{Helmholtz}-\eqref{sommer} for the smooth, bounded and
impenetrable obstacle $D(\omega)$ is well posed  \cite{colton1998inverse,MR4142771}. Hence, the corresponding far-field pattern
$u^\infty(\cdot,\omega;d,k)$ is well defined. \autoref{fig:scattering_schematic} presents the basic scattering configuration in $\mathbb{R}^3$ and  \autoref{ranex} illustrates several realizations of a randomly perturbed scatterer.

\begin{figure}
\centering
\resizebox{0.2\textwidth}{!}{%
\begin{tikzpicture}[line cap=round, line join=round, >=Latex]

\def\A{3.2}   
\def\B{4.1}   

\tikzset{
    meshline/.style={blue!80!black, line width=0.55pt},
    boundary/.style={blue!80!black, line width=1.0pt},
    scatter/.style={->, blue!80!black, line width=1.4pt},
    incident/.style={->, orange!90!black, line width=1.8pt},
    incidentfront/.style={orange!90!black, line width=1.3pt},
    labb/.style={blue!80!black},
    labo/.style={orange!90!black}
}

\draw[boundary] (0,0) ellipse[x radius=\A, y radius=\B];

\begin{scope}
    \clip (0,0) ellipse[x radius=\A, y radius=\B];

    \foreach \s in {0.12,0.24,0.36,0.48,0.60,0.72,0.84,0.92}{
        \draw[meshline] (0,0) ellipse[x radius={\A*\s}, y radius=\B];
    }

    \foreach \t in {-0.92,-0.82,-0.72,-0.62,-0.52,-0.42,-0.32,-0.22,-0.12,-0.02,
                     0.08,0.18,0.28,0.38,0.48,0.58,0.68,0.78,0.88}{
        \pgfmathsetmacro{\xr}{\A*sqrt(max(0,1-\t*\t))}
        \pgfmathsetmacro{\yr}{0.17*\A*(1-\t*\t)}
        \draw[meshline] (0,\B*\t) ellipse[x radius=\xr, y radius=\yr];
    }
\end{scope}

\node at (1.1,-2.05) {\fontsize{32}{32}\selectfont $D$};


\begin{scope}[shift={(-6.36,-0.13)}, rotate=0]
    \draw[incident] (0,0) -- (2.76,0);

    \draw[incidentfront] (0.35,-0.85) -- (0.35,0.85);
    \draw[incidentfront] (0.78,-0.62) -- (0.78,0.62);
    \draw[incidentfront] (1.18,-0.45) -- (1.18,0.45);

    \node[labo] at (1.35,0.92) {\fontsize{30}{30}\selectfont $u^i$};
\end{scope}

\begin{scope}[shift={(5.60,3.33)}, rotate=-155]
    \draw[incident] (0,0) -- (2.75,0);

    \draw[incidentfront] (0.38,-0.82) -- (0.38,0.82);
    \draw[incidentfront] (0.82,-0.60) -- (0.82,0.60);
    \draw[incidentfront] (1.22,-0.42) -- (1.22,0.42);

    \node[labo] at (1.55,0.98) {\fontsize{30}{30}\selectfont $u^i$};
\end{scope}

\draw[scatter]
    (-2.75,2.55)
    .. controls (-3.10,2.90) and (-3.75,3.00) ..
    (-4.20,3.15)
    .. controls (-4.80,3.35) and (-5.05,4.08) ..
    (-5.70,4.25)
    node[pos=0.80, above left, xshift=-2pt, yshift=2pt, labb]
    {\fontsize{26}{26}\selectfont $u^s$};
    
\draw[scatter]
    (4.10,-1.55)
    .. controls (4.55,-1.62) and (5.05,-1.88) ..
    (5.30,-2.30)
    .. controls (5.55,-2.72) and (6.15,-2.68) ..
    (6.85,-2.95)
    node[pos=0.63, below, xshift=6pt, yshift=-6pt, labb]
    {\fontsize{26}{26}\selectfont $u^s$};
\draw[scatter]
    (-1.55,-4.10)
    .. controls (-1.70,-5.05) and (-2.35,-5.55) ..
    (-2.55,-6.75)
    node[pos=0.72, right, xshift=6pt, yshift=-2pt, labb]
    {\fontsize{26}{26}\selectfont $u^s$};




\end{tikzpicture}%
}
\caption{\scriptsize  Schematic illustration of the incident and scattered fields.}
\label{fig:scattering_schematic}
\end{figure}

For the inverse scattering problem considered in this paper, we aim to recover the reference geometry of the scatterer $D$ and to quantify the statistical features of the  random shape fluctuations from the corresponding far-field patterns $u^\infty(\widehat{x},\omega;d,k)$. It is well-known that inverse shape reconstruction from far-field data is nonlinear
and ill-posed in the sense of Hadamard \cite{Hadamard1923}. When the
scatterer geometry is subject to random fluctuations, this intrinsic ill-posedness is
further compounded by the variability of the obstacle shape, making stable and
efficient reconstruction even more challenging.

\begin{figure}
\begin{center}
    \begin{overpic}[width=1\textwidth, tics=10]{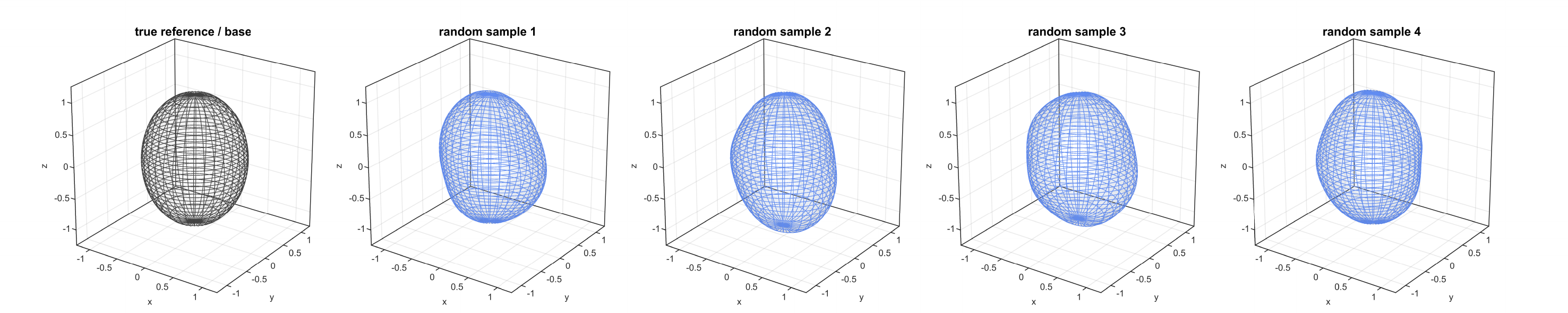}
    \end{overpic}
\end{center}
\caption{\scriptsize  Random realizations of the scatterer under Gaussian random field}
\label{ranex}
\end{figure}

The different realizations of a random scatterer motivate the need for a tractable and physically meaningful description of the random  shape variations. To this end, we introduce below a stochastic  model of three-dimensional star-shaped obstacles in which the random  perturbation will be modeled as a random field on $\mathbb{S}^2$.

\subsection{Random obstacle model and formulation}
\label{randomob}
To obtain a tractable representation of the uncertain geometry, let
$D(\omega)\subset\mathbb R^3$ denote the random obstacle with boundary
$\Gamma(\omega)=\partial D(\omega)$, where $\omega\in\Omega$ indexes
geometric realizations of the scatterer. For each fixed $\omega$, $D(\omega)$ is a
deterministic obstacle realization. We assume that, for $\mathbb P$-almost every $\omega\in\Omega$, the obstacle
$D(\omega)$ is star-shaped with respect to the  fixed point
$x_0\in\mathbb R^3$. Therefore, the boundary of each realization
can be represented by a positive and sufficiently smooth radial function
$r(\omega,\cdot):\mathbb S^2\to\mathbb R^+$. More precisely,
\begin{equation}\label{radius}
   \partial D(\omega)=\Gamma(\omega)
   =
   \{\,x_0+r(\omega,\hat{x})\hat{x}:\hat{x}\in\mathbb{S}^2\,\},
\end{equation}
where $\hat{x}\in\mathbb S^2$ denotes a direction on the unit sphere.
To separate the deterministic reference geometry from the random
variation, we decompose the radial function as
\begin{equation}
    r(\omega,\hat{x})
    =
    \bar{r}(\hat{x})
    +
    \rho(\omega,\hat{x}),
\end{equation}
here $\bar{r}$ is the deterministic reference radial function and
$\rho\in L^2\bigl(\Omega;L^2(\mathbb{S}^2)\bigr)$ is a zero-mean random
perturbation satisfying
\begin{equation}\label{exp}
    \mathbb{E}\bigl[\rho(\cdot,\hat{x})\bigr]=0,
    \qquad
    \hat{x}\in\mathbb{S}^2.
\end{equation}
The covariance structure of the perturbation field
$\rho(\omega,\hat{x})$ is taken to be isotropic on $\mathbb{S}^2$,
reflecting that no specific directional preference is imposed on  geometric fluctuations and  the correlations are assumed to depend only on the angular
separation. Since $\rho$ satisfies \eqref{exp}, its second-order dependence is
characterized by the covariance function
\begin{equation}
C_\rho(\hat{x},\hat{y})
=
\operatorname{Cov}
\bigl(\rho(\omega,\hat{x}),\rho(\omega,\hat{y})\bigr).
\end{equation}
This isotropy assumption means that the covariance is invariant under rotations.
More precisely,  for any  $Q\in SO(3)$, 
\begin{equation}
C_\rho(Q\hat{x},Q\hat{y})
=
C_\rho(\hat{x},\hat{y}),
\qquad
Q\in SO(3),
\end{equation}
where $SO(3)$ denotes the group of three-dimensional rotation matrices.
 Consequently, $C_\rho(\hat{x},\hat{y})$ depends only on the angular separation between
$\hat{x}$ and $\hat{y}$. Thus, for any $\hat{x},\hat{y}\in\mathbb{S}^2$,
\begin{equation}\label{iso}
\operatorname{Cov}
\bigl(\rho(\omega,\hat{x}),\rho(\omega,\hat{y})\bigr)
=
C_\rho(\hat{x},\hat{y})
=
C_\rho(\hat{x}\cdot\hat{y}).
\end{equation}
Since $\hat{x}\cdot\hat{y}=\cos\gamma$ where  $\gamma$ denotes the angular separation between $\hat{x}$ and
$\hat{y}$,  $C_\rho$ may equivalently be regarded as a function of
$\gamma$.

The isotropic covariance function $C_\rho$ naturally induces a covariance operator $\mathcal C_\rho$
on $L^2(\mathbb S^2)$. The spectral decomposition of $\mathcal C_\rho$ provides the well-known 
Karhunen--Lo\`eve (KL) representation of $\rho$ in the mean-square sense, where the
eigenvalues describe the strength of the fluctuation modes and the corresponding
KL coefficients are centered and mutually uncorrelated. In the Gaussian case,
these coefficients are also independent, so the  mean and covariance completely
determine the probability law of the perturbation field. For non-Gaussian
perturbations, however, the covariance characterizes only the second-order
dependence structure. Therefore, additional statistical quantities are
needed to further characterize the distributional properties of the perturbation field.

For the Gaussian random field, the general KL representation  above can be made more explicit
by using the isotropic structure on $\mathbb S^2$. In this case, the
covariance operator $\mathcal C_\rho$ is diagonalized by the real spherical
harmonics on $\mathbb{S}^2$ \cite{MR3404631,marinucci2011random}, and the perturbation field $\rho$ admits the KL expansion
\begin{equation}\label{Crho}
\rho(\omega,\hat{x})
=
\sum_{\ell=0}^{\infty}
\sum_{m=-\ell}^{\ell}
\sqrt{\lambda_\ell}\,
\xi_{\ell m}(\omega)
Y_{\ell m}(\hat{x}),
\end{equation}
where $\ell$ and $m$ denote the degree and order of the real spherical harmonics,
respectively, $\lambda_\ell$ is the degree-wise KL eigenvalue and
$\{\xi_{\ell m}\}$ are mutually independent standard normal random variables.
Equivalently,
\begin{equation}
\xi_{\ell m}\sim\mathcal{N}(0,1),
\qquad
\mathbb{E}[\xi_{\ell m}\xi_{\ell' m'}]
=
\delta_{\ell\ell'}\delta_{mm'}.
\end{equation}
The covariance operator
$\mathcal{C}_\rho:L^2(\mathbb{S}^2)\to L^2(\mathbb{S}^2)$ is defined by
\begin{equation}
(\mathcal{C}_\rho f)(\hat{x})
=
\int_{\mathbb{S}^2}
C_\rho(\hat{x},\hat{y})f(\hat{y})\,dS(\hat{y}).
\end{equation}
Its eigenpairs satisfy
\begin{equation}
\mathcal{C}_\rho Y_{\ell m}
=\int_{\mathbb{S}^2}
C_\rho(\hat{x},\hat{y})Y_{\ell m}(\hat{y})\,dS(\hat{y})=\lambda_\ell Y_{\ell m}(\hat{x}).
\end{equation}
For an isotropic covariance kernel, each eigenvalue $\lambda_\ell$  independent of $m$  has multiplicity $2\ell+1$. 
For later numerical implementation, the expansion \eqref{Crho} is truncated at degree
$L_{\max}$:
\begin{equation}\label{KL}
\rho(\omega,\hat{x})
\approx
\sum_{\ell=0}^{L_{\max}}
\sum_{m=-\ell}^{\ell}
\sqrt{\lambda_\ell}\,
\xi_{\ell m}(\omega)
Y_{\ell m}(\hat{x}).
\end{equation}
The accuracy of this finite-dimensional approximation is governed by
the decay of the covariance spectrum. In particular, a faster decay of
$\{\lambda_\ell\}$ leads to a smaller truncation error. To generate smooth and spatially correlated perturbations,  the classical 
squared-exponential covariance kernel is adopted here:
\begin{equation}\label{cov}
C_\rho(\hat{x},\hat{y})
=
\sigma^2
\exp\left(
-\frac{\|\hat{x}-\hat{y}\|^2}{\ell_c^2}
\right),
\qquad
\hat{x},\hat{y}\in\mathbb{S}^2,
\end{equation}
where $\|\cdot\|=\|\cdot\|_{\mathbb{R}^3}$ denotes the Euclidean norm
in $\mathbb{R}^3$.  Since
\begin{equation}
\hat{x}\cdot\hat{y}=\cos\gamma,
\qquad
\|\hat{x}-\hat{y}\|^2
=
2-2\hat{x}\cdot\hat{y}
=
4\sin^2\left(\frac{\gamma}{2}\right),
\end{equation}
the covariance kernel in \eqref{cov} can equivalently be written as
\begin{equation}
C_\rho(\hat{x},\hat{y})
=
\sigma^2
\exp\left(
-\frac{2(1-\hat{x}\cdot\hat{y})}{\ell_c^2}
\right)
=
\sigma^2
\exp\left(
-\frac{4\sin^2(\gamma/2)}{\ell_c^2}
\right).
\end{equation}
Hence, the covariance depends only on the angular separation and is
consistent with the isotropic covariance assumption.  The parameter $\sigma^2$ in \eqref{cov} is the pointwise variance of the perturbation
field since $C_\rho(\hat{x},\hat{x})=\sigma^2.$ Thus, $\sigma$ controls the fluctuation amplitude, while $\ell_c$
determines the characteristic spatial correlation scale. Larger values
of $\ell_c$ produce more slowly varying perturbations, whereas smaller
values lead to more localized fluctuations. 

Note that the squared-exponential kernel in \eqref{cov} is shown to be  continuous, symmetric and positive
semi-definite \cite{oksendal2013stochastic}, therefore \eqref{cov} defines a
compact, self-adjoint and positive covariance operator on
$L^2(\mathbb{S}^2)$. The eigenvalues $\{\lambda_\ell\}$ in
\eqref{KL} are the eigenvalues of $\mathcal{C}_\rho$ associated with
the covariance kernel \eqref{cov}. In this sense, within the chosen squared-exponential covariance model, the pair
$(\sigma,\ell_c)$ provides a concise and complete parametrization of the fluctuation
strength and spatial correlation of the Gaussian perturbation field.

\begin{remark}
Since spherical coordinates are not globally regular on $\mathbb S^2$ and the
same azimuthal difference may correspond to different physical distances at
different latitudes, the present three-dimensional stochastic model, under the
isotropy assumption, characterizes correlations through the angular separation
between directions on $\mathbb S^2$  rather than prescribing them directly through
separations between boundary parameters as in the two-dimensional perturbation
model \cite{sun2026inverse}.  Accordingly, \eqref{iso} provides a geometrically intrinsic and computationally convenient measure for
characterizing  correlations of the isotropic random field in the three-dimensional setting.

\end{remark}

\begin{remark}
     In our present context the squared-exponential kernel \eqref{cov} is adopted as a representative isotropic covariance model since it produces smooth shape fluctuations and provides a classical two-parameter description of the perturbation amplitude and correlation length. The random field formulation is not restricted to this particular choice and can accommodate other covariance models, provided that they define valid covariance operators on $L^2(\mathbb S^2)$ and generate perturbations with sufficient regularity for stable reconstruction. Moreover, since the random obstacle parametrization in \eqref{radius}--\eqref{exp}
is  independent of the governing wave equation, the same
geometrical description can also be adapted to other settings such as elastic or electromagnetic scattering
problems. In this work, we focus mainly on the acoustic scattering setting to develop and
demonstrate the proposed reconstruction method.
\end{remark}

\section{Reconstruction method for random obstacles}
\label{Reconstruction method}

For random scatterers with isotropic shape fluctuations, the inverse problem considered here seeks to recover the reference shape and characterize the statistical structure of the associated perturbation field. In three dimensions, solving this inverse problem is particularly
challenging because the far-field data depend nonlinearly on the
obstacle geometry and the inversion is ill-posed. This
difficulty is further compounded by the need to accurately resolve
geometric variations across different realizations for reliable
statistical estimation. Multi-frequency measurements are known to improve stability in time-harmonic scattering problems \cite{bao2015inverse}, and frequency continuation allows the reconstruction to proceed progressively from coarse to fine scales. Inspired by \cite{MR2363787,MR4162000,MR2998714}, we propose an efficient Monte Carlo-based multi-frequency recursive linearization algorithm for the  inverse random obstacle problem. Since this procedure requires repeated forward solves over multiple realizations
and frequencies, we first introduce the forward solver used in the later
multi-frequency reconstruction algorithm.

\subsection{Forward problem}\label{sub2}

 For a fixed realization $\omega$, the random radial function $r(\omega,\hat x)$ becomes deterministic and so does the corresponding scatterer. Let $u^i=e^{ikx\cdot d}$ be the incident plane wave and let $D$ be the given scatterer with boundary $\Gamma=\partial D$. By Green's representation formula, the scattered field admits the representation
\begin{equation}\label{Helmholtzre}
    u^s(x)
=
\int_\Gamma
\left[
u^s(y)\frac{\partial \Phi(x,y)}{\partial \nu(y)}
-
\Phi(x,y)\frac{\partial u^s}{\partial \nu}(y)
\right]d\sigma_y ,
\qquad x\in \mathbb R^3\setminus \widebar{D},
\end{equation}
where $\Phi(x,y)$ is the outgoing fundamental solution of the Helmholtz equation in $\mathbb R^3$ and $\nu$ denotes the unit outward normal to $\Gamma$. Using the sound-soft boundary condition \eqref{Dirichlet}, the exterior total field can be represented in terms of the Neumann trace $\partial u/\partial \nu$ as
\begin{equation}\label{31}
    u(x)
+
\int_\Gamma
\frac{\partial u}{\partial \nu}(y)\Phi(x,y)d\sigma_y
=
u^i(x),
\qquad x\in\mathbb R^3\setminus\widebar{D}.
\end{equation}
Accordingly, once the Neumann data $\partial u/\partial\nu$ on $\Gamma$ are determined, the far-field pattern is obtained from
\begin{equation}
    u^\infty(\widehat x,d)
=
-\frac{1}{4\pi}
\int_\Gamma
e^{-ik \widehat x\cdot y}
\frac{\partial u}{\partial \nu}(y,d)d\sigma_y,
\qquad \widehat x,d\in \mathbb{S}^2.
\end{equation}
To determine $\partial u/\partial \nu$, we introduce the single-layer boundary operator $S$ and its normal derivative operator $K'$, defined by
\[
(S\rho)(x)=\int_{\Gamma}\Phi(x,y)\rho(y)\,d\sigma_y,
\qquad
(K'\rho)(x)=\int_{\Gamma}
\frac{\partial \Phi(x,y)}{\partial \nu(x)}\rho(y)\,d\sigma_y ,
\qquad x\in\Gamma .
\]
Taking the exterior normal derivative of \eqref{31} and applying the jump relation for the normal derivative of the single-layer potential yields
\begin{equation}\label{neu1}
    \left(\frac12 I+K'\right)\frac{\partial u}{\partial\nu}
=
\frac{\partial u^i}{\partial\nu}
\qquad \text{on }\Gamma .
\end{equation}
The boundary trace of \eqref{31} gives
\begin{equation}\label{neu2}
    S\frac{\partial u}{\partial\nu}
=
u^i
\qquad \text{on }\Gamma .
\end{equation}
A combined-field equation is then obtained by multiplying \eqref{neu2} by $-i\eta_1$ and adding it to \eqref{neu1}:
\begin{equation}\label{neumann}
    \left(
\frac12 I+K'-i\eta_1 S
\right)
\frac{\partial u}{\partial \nu}
=
\frac{\partial u^i}{\partial \nu}
-
i\eta_1 u^i
\qquad \text{on } \Gamma .
\end{equation}
Here $\eta_1>0$ is a coupling parameter, typically chosen as $\eta_1=k/2$. The combined-field term $-i\eta_1 S$ is introduced to ensure the unique solvability of the boundary integral equation and to improve numerical stability. After solving the boundary integral equation \eqref{neumann} for $\partial u/\partial\nu$, the forward far-field data are evaluated by the preceding far-field representation.

\subsection{The reconstruction of mean shape}\label{sub3}
Denote by $r_j(\hat x):=r(\omega_j,\hat x)$ the true radius corresponding to the 
$j$-th sample and by $u_{j}^{\infty,\delta}(\widehat x,d;k)$ its measured far‑field data with noise $\delta$, here $\widehat x\in\mathbb S^2$ is the observation direction, $d\in\mathbb S^2$ the incident direction and $k$ the wavenumber. For a given sample $\omega_j$, define the nonlinear far-field operator $\mathcal{F}_k:\partial D\rightarrow{L}^2(\mathbb{S}^2)$ from the radial function to the far‑field data as 
\begin{equation}
   \mathcal F_k(r_{j})(\widehat x,d)=u_{j}^\infty(\widehat x,d;k).
\end{equation}
To address this nonlinear inverse problem, we use the domain differentiability of
the far-field operator under boundary deformations \cite{MR1203018} and employ a  multi-frequency recursive linearization strategy.   Let $r_{j,n}$ be the current approximation to $r_{j}$ at the $n$-th iteration.  Define the update as
\begin{equation}\label{h}
    h_{j,n}(\hat x)
    =
    r_{j,n+1}(\hat x)-r_{j,n}(\hat x).
\end{equation}
Linearizing the far‑field operator $\mathcal F_k$ around $r_{j,n}$ using the Fr\'echet derivative $\mathcal F_k'[r_{j,n}]$  gives
\begin{equation}\label{RLA}
     \mathcal F_k(r_{j,n+1})
     =
     \mathcal F_k(r_{j,n}+h_{j,n})
     \approx
     \mathcal F_k(r_{j,n})
     +
     \mathcal F_k'[r_{j,n}]h_{j,n}.
\end{equation}
Suppose the current boundary is $\Gamma_{j,n}=\{r_{j,n}(\hat x)\hat x:\hat x\in\mathbb S^2\}$, then the  displacement induced by the radius increment  $h_{j,n}$ is 
\begin{equation}
    V_{j,n}(\hat x)
    =
    h_{j,n}(\hat x)\hat x.
\end{equation}
Projecting this displacement onto the outward normal direction gives
\begin{equation}
    V_{j,n}\cdot \nu_{j,n}
    =
    h_{j,n}(\hat x)(\hat x\cdot \nu_{j,n}),
\end{equation}
where  $\nu_{j,n}$  is the unit outward normal on the current boundary $\Gamma_{j,n}$. Let $v_{j,n}$ denote the shape derivative field associated with the perturbation $V_{j,n}$. Then $v_{j,n}$ satisfies 
\begin{equation}\label{vn}
\begin{gathered}
\Delta v_{j,n}+k^2v_{j,n} = 0
\quad \text{in } \mathbb R^3\setminus \overline{D_{j,n}}, \\[1mm]
v_{j,n}
=
-h_{j,n}(\hat x)(\hat x\cdot \nu_{j,n})
\frac{\partial u_{j,n}}{\partial \nu_{j,n}}
\quad \text{on } \Gamma_{j,n}, \\[1mm]
\lim_{r\to\infty} r\left(
\frac{\partial v_{j,n}}{\partial r}-ikv_{j,n}
\right)=0,
\quad r=|x|.
\end{gathered}
\end{equation}
Here  $u_{j,n}$ is the total field corresponding to the current shape $r_{j,n}$ and $\partial u_{j,n}/\partial \nu_{j,n}$ denotes its normal derivative on  $\Gamma_{j,n}$. The far‑field pattern $v_{j,n}^\infty$ of $v_{j,n}$ is precisely the first‑order response of the far‑field operator to the radius increment $h_{j,n}$, i.e.
\begin{equation}\label{deria}
    \mathcal F_k'[r_{j,n}]h_{j,n}
    =
    v_{j,n}^\infty(\cdot,\cdot;k).
\end{equation}
Substituting \eqref{deria} into \eqref{RLA}, we obtain the following linear approximation
of the far-field data near the current shape:
\begin{equation}\label{linear}
    \mathcal F_k(r_{j,n}+h_{j,n})
    \approx
    \mathcal F_k(r_{j,n})
    +
    \mathcal F_k'[r_{j,n}]h_{j,n}
    =
    \mathcal F_k(r_{j,n})
    +
    v_{j,n}^\infty .
\end{equation}
To solve for $v_{j,n}$, suppose that $v_{j,n}$ admits a combined-density representation
with unknown density $\varphi_{j,n}$:
\begin{equation}\label{combine}
    v_{j,n}(x)
    =
    \int_{\Gamma_{j,n}}
    \left(
    \frac{\partial \Phi(x,y)}{\partial \nu(y)}
    -
    i\eta_2 \Phi(x,y)
    \right)
    \varphi_{j,n}(y)\,d\sigma_y,
    \qquad x\in \mathbb R^3\setminus \overline{D_{j,n}},
\end{equation}
where $\eta_2$ is the coupling parameter. Then substituting \eqref{combine} into \eqref{vn} and using the jump relations, we obtain the
boundary integral equation
\begin{equation}\label{bie}
\left(
\frac12 I + K_{j,n} - i\eta_2 S_{j,n}
\right)\varphi_{j,n}
=
-(V_{j,n}\cdot\nu_{j,n})
\frac{\partial u_{j,n}}{\partial \nu_{j,n}},
\qquad \text{on } \Gamma_{j,n}.
\end{equation}
Once $\varphi_{j,n}$ is determined, the far-field pattern $v_{j,n}^\infty$ can be
evaluated directly.

At a fixed frequency $k$, the update at the $n$-th step for the $j$-th sample is obtained by solving the regularized linear least-squares problem
\begin{equation}\label{problem}
    \min_{h}
\left\|
\mathcal F_k'[r_{j,n}]h
+
\mathcal F_k(r_{j,n})
-
u_j^{\infty,\delta}(\cdot,\cdot;k)
\right\|_Y^2
+
\alpha_{j,n}
\left\|
h+r_{j,n}-r_{\rm init}
\right\|_X^2,
\end{equation}
here $Y$ denotes the far‑field data space, $X$ the shape  space, $\alpha_{j,n}$ the regularization parameter and $r_{init}$ the initial shape. For computational tractability, we avoid solving for 
$h_{j,n}$ directly in the infinite-dimensional function space and instead expand it in a finite-dimensional basis. Specifically, we set
$h_{j,n}(\hat x)
=
\sum_{p=1}^{N_L} c_{j,n,p} \psi_p(\hat x),$
where $\{\psi_p\}_{p=1}^{N_L}$ denotes a basis of truncated spherical harmonics and $\{c_{j,n,p}\}_{p=1}^{N_L}$ are the coefficients to be determined. This parameterization effectively transforms the infinite-dimensional unknown $h_{j,n}$ at the $n$-th iteration into a finite-dimensional coefficient vector $\mathbf c_{j,n}
=
\bigl(c_{j,n,1},\ldots,c_{j,n,N_L}\bigr)^T$. For each basis function $\psi_p$, we consider the perturbation direction $h=\psi_p$ which induces the boundary displacement $V_{j,n,p}(\hat{x})=\psi_p(\hat{x})\hat{x}$. The corresponding shape derivative field $v_{j,n,p}$ then satisfies
\begin{equation}
    v_{j,n,p}
    =
    -(V_{j,n,p}\cdot\nu_{j,n})
    \frac{\partial u_{j,n}}{\partial \nu_{j,n}}
    =
    -\psi_p(\hat x)(\hat x\cdot\nu_{j,n})
    \frac{\partial u_{j,n}}{\partial \nu_{j,n}}
    \quad \text{on } \Gamma_{j,n}.
\end{equation}
By assembling the far-field derivatives corresponding to each basis function as column vectors, we obtain the Jacobian matrix
$J_{j,n}
=
\left[
\mathcal F_k'[r_{j,n}]\psi_1,\,
\mathcal F_k'[r_{j,n}]\psi_2,\,
\ldots,\,
\mathcal F_k'[r_{j,n}]\psi_{N_L}
    \right].$
Thus, the linearized system \eqref{linear} takes the form
\begin{equation}\label{38}
    \mathcal F_k(r_{j,n}+h_{j,n})
    \approx
    \mathcal F_k(r_{j,n})+J_{j,n}\mathbf c_{j,n}.
\end{equation}
Building on the linearization \eqref{38}, we employ the Gauss--Newton
method to solve \eqref{problem} to update the current parameters. Denote the current far-field residual by
$\mathbf b_{j,n}:=u_j^{\infty,\delta}(\cdot,\cdot;k)-\mathcal F_k(r_{j,n})$.
Therefore, the coefficient update at the $n$-th iteration is defined by the following regularized least-squares problem:
\begin{equation}\label{zuixiao}
\mathbf c_{j,n}
=
\operatorname*{arg\,min}_{\mathbf c}
\left\|
J_{j,n}\mathbf c-\mathbf b_{j,n}
\right\|_2^2
+
\alpha_{j,n}
\left\|
\mathcal R
\bigl(\mathbf c+\mathbf a_{j,n}-\mathbf a_{\rm init}\bigr)
\right\|_2^2,
\end{equation}
here $\mathbf a_{j,n}$ and $\mathbf a_{\rm init}$ are the coefficient vectors for the current radius $r_{j,n}$ and the initial shape, respectively. $\alpha_{j,n}>0$ is the regularization parameter and $\mathcal R$ is the coefficient factor associated with the chosen regularization norm $\|\cdot\|_X$. To accommodate complex-valued far-field data while retaining real-valued shape coefficients, we separate the real and imaginary parts and define
\begin{equation}\label{complex}
    \widetilde J_{j,n}
=
\begin{pmatrix}
\operatorname{Re} J_{j,n}\\
\operatorname{Im} J_{j,n}
\end{pmatrix},
\qquad
\widetilde{\mathbf b}_{j,n}
=
\begin{pmatrix}
\operatorname{Re}\mathbf b_{j,n}\\
\operatorname{Im}\mathbf b_{j,n}
\end{pmatrix}.
\end{equation}
Substituting \eqref{complex} into \eqref{zuixiao}, the coefficient update is then determined by the real-valued regularized least-squares problem
\begin{equation}\label{que}
   \mathbf c_{j,n}
=
\operatorname*{arg\,min}_{\mathbf c}
\left\|
\widetilde J_{j,n}\mathbf c-\widetilde{\mathbf b}_{j,n}
\right\|_2^2
+
\alpha_{j,n}
\left\|
\mathcal R
\bigl(\mathbf c+\mathbf a_{j,n}-\mathbf a_{\rm init}\bigr)
\right\|_2^2.
\end{equation}
The first-order optimality condition of \eqref{que} yields the  normal equation
\begin{equation}
    \left(
\widetilde J_{j,n}^T\widetilde J_{j,n}
+
\alpha_{j,n} \mathcal{R}^T\mathcal{R}
\right)\mathbf c_{j,n}
=
\widetilde J_{j,n}^T\widetilde{\mathbf b}_{j,n}
-
\alpha_{j,n} \mathcal{R}^T\mathcal{R}(\mathbf a_{j,n}-\mathbf a_{\rm init}).
\end{equation}
Once the solution $\mathbf c_{j,n}=(c_{j,n,1},\ldots,c_{j,n,N_L})^T$ is computed, the radius is updated as
$r_{j,n+1}=r_{j,n}+h_{j,n}$ with
$h_{j,n}(\hat x)=\sum_{p=1}^{N_L}c_{j,n,p}\psi_p(\hat x)$.

To enhance the inversion stability and progressively recover finer geometric structures, 
we adopt a frequency continuation strategy: low-frequency components primarily capture 
the global obstacle outline, while high-frequency components encode local details. 
Let the available frequencies be $k_1<k_2<\cdots<k_Q$. For the $j$-th sample, the 
inversion starts at the lowest frequency $k_1$ from the initial shape
\[
    r_{j,1,0}=r_{\rm init}.
\]
At the $q$-th frequency stage, regularized Gauss--Newton iterations are performed with 
$k=k_q$ to obtain the stage reconstruction $r_{j,q,N_q}$, where $N_q$ denotes the 
number of iterations at this frequency. For each subsequent frequency, the previous 
stage reconstruction is passed as the initial guess:
\begin{equation}
    r_{j,q+1,0}=r_{j,q,N_q},\qquad q=1,\ldots,Q-1.
\end{equation}
The process continues until the largest frequency $k_Q$ is reached. The final output
\[
    r_{j,{\rm rec}}:=r_{j,Q,N_Q}
\]
serves as the recovered radius corresponding to the $j$-th random sample $r_j$.

The above recovery procedure is performed independently for each sample. As these 
samples constitute independent realizations, the mean radius function can be estimated by 
\begin{equation}\label{mean}
    \widebar r_{\rm rec}(\hat x)
=
\frac{1}{M}
\sum_{j=1}^{M}
r_{j,{\rm rec}}(\hat x).
\end{equation}
The resulting recovered mean shape is then expressed as
\begin{equation}
    \widebar\Gamma_{\rm rec}
=
\{\widebar r_{\rm rec}(\hat x)\hat x:\hat x\in\mathbb S^2\}.
\end{equation}
This completes the reference shape recovery process. The above procedure is summarized in Algorithm~\ref{alg:MC-MFRLA-mean} which constitutes the first part of our proposed reconstruction method.
 
\begin{algorithm}[htbp]
\small
\caption{Monte Carlo-based multi-frequency recursive linearization algorithm for mean-shape recovery}
\label{alg:MC-MFRLA-mean}

\SetKwInOut{Input}{Input}
\SetKwInOut{Output}{Output}

\Input{
Noisy far-field data
$\{u_j^{\infty,\delta}(\hat{x}_m,d_t;k_q)\}$,
$j=1,\ldots,M$, $m=1,\ldots,N_{\rm obs}$,
$t=1,\ldots,N_{\rm inc}$, $q=1,\ldots,Q$;
frequencies $k_1<\cdots<k_Q$;
spherical harmonic truncation degree $L_{\max}$ and
$N_L=(L_{\max}+1)^2$;
initial guess $\boldsymbol{a}_{\rm init}$;
regularization matrix $\mathcal{R}$;
regularization parameters $\alpha_{j,q,n}$;
tolerance $\varepsilon$; maximum iteration number $T$.
}

\Output{
Recovered sample radii $\{r_{j,{\rm rec}}\}_{j=1}^M$,
recovered mean radius $\widebar r_{\rm rec}$
and recovered mean shape $\widebar\Gamma_{\rm rec}$.
}

Construct the truncated spherical harmonic basis
\[
\{\psi_p\}_{p=1}^{N_L}
=
\{Y_{\ell s}:0\leq\ell\leq L_{\max},\ -\ell\leq s\leq\ell\}.
\]

\For{$j=1,2,\ldots,M$}{

    Set the initial coefficient vector at the first frequency:
    \[
        \boldsymbol{a}_{j,1,0}
        =
        \boldsymbol{a}_{\rm init}.
    \]

    \For{$q=1,2,\ldots,Q$}{

        \If{$q>1$}{
            Set the initial guess at the current frequency by the previous reconstruction:
            \[
                \boldsymbol{a}_{j,q,0}
                =
                \boldsymbol{a}_{j,q-1}^{\rm out}.
            \]
        }

        Set $n=0$ and $\tau_0=1$\;

        \While{$\tau_n>\varepsilon$ and $n<T$}{

            Compute the current radius
            $r_{j,q,n}=r(\,\cdot\,;\boldsymbol{a}_{j,q,n})$\;

            Compute the residual and the shape derivative matrix:
            \[
            \boldsymbol{b}_{j,q,n}
            =
            u_j^{\infty,\delta}(\cdot,\cdot;k_q)
            -
            \mathcal{F}_{k_q}
            \bigl(r_{j,q,n}\bigr),
            \]
            \[
            J_{j,q,n}
            =
            \left[
            \mathcal{F}_{k_q}'
            \bigl[r_{j,q,n}\bigr]\psi_1,\,
            \mathcal{F}_{k_q}'
            \bigl[r_{j,q,n}\bigr]\psi_2,\,
            \ldots,\,
            \mathcal{F}_{k_q}'
            \bigl[r_{j,q,n}\bigr]\psi_{N_L}
            \right].
            \]

            Solve the regularized linearized problem:
            \[
            \boldsymbol{c}_{j,q,n}
            =
            \operatorname*{arg\,min}_{\boldsymbol{c}\in\mathbb{R}^{N_L}}
            \left\{
            \|J_{j,q,n}\boldsymbol{c}
            -\boldsymbol{b}_{j,q,n}\|_2^2
            +
            \alpha_{j,q,n}
            \left\|
            \mathcal{R}
            \bigl(
            \boldsymbol{a}_{j,q,n}
            +\boldsymbol{c}
            -\boldsymbol{a}_{\rm init}
            \bigr)
            \right\|_2^2
            \right\}.
            \]

            Update the coefficient vector:
            \[
            \boldsymbol{a}_{j,q,n+1}
            =
            \boldsymbol{a}_{j,q,n}
            +
            \boldsymbol{c}_{j,q,n},
            \qquad
            \tau_{n+1}
            =
            \frac{\|\boldsymbol{c}_{j,q,n}\|_2}
            {\max\{1,\|\boldsymbol{a}_{j,q,n}\|_2\}}.
            \]

            Set $n=n+1$\;
        }

        Set the output of the current frequency stage:
        \[
            \boldsymbol{a}_{j,q}^{\rm out}
            =
            \boldsymbol{a}_{j,q,n}.
        \]
    }

    Compute the recovered radius for the $j$-th sample:
    \[
        r_{j,{\rm rec}}
        =
        r(\,\cdot\,;\boldsymbol{a}_{j,Q}^{\rm out}).
    \]
}

Compute the empirical mean radius:
\[
\widebar r_{\rm rec}(\hat{x})
=
\frac{1}{M}
\sum_{j=1}^{M}
r_{j,{\rm rec}}(\hat{x}).
\]

Construct the recovered mean shape:
\[
\widebar\Gamma_{\rm rec}
=
\{\widebar r_{\rm rec}(\hat{x})\hat{x}:\hat{x}\in\mathbb S^2\}.
\]

\end{algorithm}

\subsection{The reconstruction of random statistics}\label{sub4}
\label{random statistics}

\subsubsection{Gaussian random field}
Building upon the parametric representation in \autoref{randomob}, when the shape perturbations are modeled as a Gaussian random field, we estimate the statistical parameters of the underlying field by applying a Monte Carlo (MC) strategy to the reconstructed samples. Specifically, we aim to estimate the eigenvalues $\lambda_\ell$ in \eqref{KL}, together with the variance parameter $\sigma^2$ and the correlation length $\ell_c$ appearing in \eqref{cov}.

For the isotropic Gaussian random field on $\mathbb{S}^2$,   with the recovered
mean radius $\widebar\Gamma_{\rm rec}$ obtained from Monte Carlo averaging, we first define the centered fluctuations:
\begin{equation}\label{flu}
    \rho_{j,{\rm rec}}(\hat x)
=
r_{j,{\rm rec}}(\hat x)-\widebar r_{\rm rec}(\hat x).
\end{equation}
Then by discretizing each recovered perturbation on the spherical grid $\{\hat x_i\}_{i=1}^{N}$ we obtain the sample vector $\boldsymbol{\rho}_{j,{\rm rec}}
=
\bigl(
\rho_{j,{\rm rec}}(\hat{x}_1),
\rho_{j,{\rm rec}}(\hat{x}_2),
\ldots,
\rho_{j,{\rm rec}}(\hat{x}_N)
\bigr)^T.$ Stacking all $M$ such vectors as columns gives the data matrix $R
=
\left[
\boldsymbol{\rho}_{1,{\rm rec}}\ 
\boldsymbol{\rho}_{2,{\rm rec}}\ 
\cdots\
\boldsymbol{\rho}_{M,{\rm rec}}
\right]
\in\mathbb R^{N\times M}.$ To preserve the $L^2(\mathbb S^2)$ inner product structure in the discrete setting, we introduce spherical integration weights. Define $W=\mathrm{diag}(w_1,\ldots,w_N)$ as the diagonal matrix of quadrature weights with  $w_i$ approximating the area element at the $i$-th node. We then form the weighted data matrix $\widetilde R
=
W^{1/2}R.$ The weighted covariance matrix is then defined as
\begin{equation}\label{Cemp}
    \widetilde C_{\rm emp}
=
\frac{1}{M-1}
W^{1/2}RR^TW^{1/2} =
\frac{1}{M-1}
\widetilde R\widetilde R^T.
\end{equation}
To avoid explicitly forming the potentially large  $N\times N$ matrix  $\widetilde C_{\rm emp}$, we compute the SVD of $\widetilde R$ directly:
\begin{equation}\label{SVD}
    \widetilde R
=
U\Sigma V^T.
\end{equation}
Substituting \eqref{SVD} into \eqref{Cemp}, we obtain
\begin{equation}
    \widetilde C_{\rm emp}
=
\frac{1}{M-1}
\widetilde R\widetilde R^T
=
U
\frac{\Sigma^2}{M-1}
U^T .
\end{equation}
Let $s_a$ denote the $a$-th singular value of $\widetilde R$. The individual empirical
eigenvalues of the weighted covariance matrix are then given by
\begin{equation}
    \widehat\mu_a^{\rm rec}
=
\frac{s_a^2}{M-1},
\qquad a=1,2,\ldots,\operatorname{rank}(\widetilde R).
\end{equation}
Since an isotropic Gaussian field on $\mathbb S^2$ has $2\ell+1$ identical eigenvalues
for each spherical harmonic degree $\ell$, we group the individual empirical eigenvalues
accordingly and define
\begin{equation}
    \widehat\lambda_\ell^{\rm rec}
=
\frac{1}{2\ell+1}
\sum_{a=\ell^2+1}^{(\ell+1)^2}
\widehat\mu_a^{\rm rec},
\qquad
\ell=0,1,\ldots,L_{\max},
\end{equation}
provided that $(\ell+1)^2\le \operatorname{rank}(\widetilde R)$. This gives the degree-wise KL spectrum used in the estimation of the Gaussian covariance parameters.

Given the reconstructed degree-wise KL eigenvalues 
$\{\widehat\lambda_\ell^{\rm rec}\}$, the next step is to infer the 
hyper-parameters $\sigma$ and $\ell_c$ of the covariance kernel. By construction, 
the covariance kernel of the random fluctuations is isotropic and therefore depends 
only on the angular separation $\gamma$ on $\mathbb S^2$, namely
$C_\rho(\hat x,\hat y)=C_\rho(\gamma)$ with 
$\cos\gamma=\hat x\cdot\hat y$. For this covariance operator, the degree-wise 
eigenvalues are given by
\begin{equation}\label{app}
    \lambda_\ell
=
2\pi
\int_0^\pi
C_\rho(\gamma)P_\ell(\cos\gamma)\sin\gamma\,d\gamma ,
\end{equation}
where $P_\ell$ denotes the Legendre polynomial of degree $\ell$. For short correlation lengths, the covariance kernel in \eqref{cov} can be locally 
approximated by 
$C_\rho(\gamma)\approx \sigma^2\exp(-\gamma^2/\ell_c^2)$.   Substituting this local form into \eqref{app} gives
\begin{equation}\label{apppro}
     \lambda_\ell
\approx
2\pi\sigma^2
\int_0^\infty
e^{-\gamma^2/\ell_c^2}
J_0\big((\ell+\tfrac12)\gamma\big)\gamma\,d\gamma
\approx
\pi\sigma^2\ell_c^2
\exp\left(
-\frac{\ell(\ell+1)\ell_c^2}{4}
\right),
\end{equation}
where $J_0$ is the Bessel function of the first kind of order zero. Taking the logarithm of both sides of \eqref{apppro}, we obtain
\begin{equation}\label{log}
    \log \lambda_\ell
\approx
\log(\pi\sigma^2\ell_c^2)
-
\frac{\ell_c^2}{4}\ell(\ell+1).
\end{equation}
Thus, by defining 
$x_\ell=\ell(\ell+1)$ and 
$y_\ell=\log \widehat\lambda_\ell^{\rm rec}$, where 
$\widehat\lambda_\ell^{\rm rec}$ denotes the reconstructed KL eigenvalue of degree 
$\ell$, the spectral approximation \eqref{log} reduces to the linear model
\begin{equation}
    y_\ell\approx A-Bx_\ell,
\end{equation}
where
$A=\log(\pi\sigma^2\ell_c^2)$ and 
$B=\ell_c^2/4$. By selecting a set of stable low-degree modes $\ell\in\mathcal I \subseteq\{0,1,\ldots,L_{\max}\}$, we perform the 
least-squares fitting
\begin{equation}
(A_{\rm fit},B_{\rm fit})
=
\operatorname*{arg\,min}_{A,B}
\sum_{\ell\in\mathcal I}
\left[
\log \widehat\lambda_\ell^{\rm rec}
-
A
+
B\ell(\ell+1)
\right]^2 .
\end{equation}
The correlation length is estimated from $B_{\rm fit}$, and the fluctuation amplitude 
is then recovered from $A_{\rm fit}$:
\begin{equation}
    \widehat\ell_c
=
2\sqrt{B_{\rm fit}},
\qquad
\widehat\sigma
=
\left(
\frac{e^{A_{\rm fit}}}
{\pi\widehat\ell_c^2}
\right)^{1/2}.
\end{equation}

\subsubsection{Non-Gaussian random field}When the fluctuations are non-Gaussian, the inversion procedure still
begins by independently reconstructing a deterministic obstacle for each
random sample. The recovered mean in \eqref{mean} is then approximated
by Monte Carlo averaging and the centered perturbations are computed
according to \eqref{flu}.

However, although the KL expansion can still be
constructed for the non-Gaussian perturbation field, its coefficients
are generally pairwise uncorrelated but not necessarily independent, therefore
the resulting spectral representation  captures only the
second-order structure and may not reflect higher-order statistical
dependence among the coefficients. For this reason, we do not regard
the KL spectrum as the primary quantity to be estimated. Instead, the
covariance structure is recovered directly from the centered
reconstructed perturbations. Specifically, for any
$\hat{x},\hat{y}\in\mathbb{S}^2$, the covariance kernel is approximated
by
\begin{equation}
    \widehat C_{\rho}^{\rm rec}(\hat x,\hat y)
=
\frac{1}{M-1}
\sum_{j=1}^{M}
\rho_{j,{\rm rec}}(\hat x)\rho_{j,{\rm rec}}(\hat y).
\end{equation}
Under the isotropy assumption, the covariance depends only on the
angular separation
\begin{equation}
\gamma
=
\arccos(\hat{x}\cdot\hat{y}).
\end{equation}
To obtain the estimated covariance function, the interval of possible
angular separations is divided into small subintervals. The covariance
values associated with direction pairs whose angular separations lie
in the same subinterval are then averaged, yielding an approximation
$\widehat C_\rho^{\rm rec}(\gamma)$ of the covariance as a function of angular
separation.

While the recovered covariance characterizes the second-order spatial
dependence of the non-Gaussian perturbations, it does not determine their
 probability law. To obtain additional distributional
information, we employ kernel density estimation (KDE)
\cite{wkeglarczyk2018kernel} to approximate the pointwise marginal
probability density functions. Specifically, for a fixed direction
$\hat{x}\in\mathbb{S}^2$, the reconstructed samples provide the
pointwise perturbation values
\begin{equation}
    \rho_{1,{\rm rec}}(\hat x),\ 
    \rho_{2,{\rm rec}}(\hat x),\ 
    \ldots,\ 
    \rho_{M,{\rm rec}}(\hat x).
\end{equation}
The corresponding marginal probability density is estimated by
\begin{equation}
    \widehat f_{\hat x}^{\rm rec}(s)
=
\frac{1}{M h_{\rm kde}}
\sum_{j=1}^{M}
\mathcal K\left(
\frac{
s-\rho_{j,{\rm rec}}(\hat x)
}{h_{\rm kde}}
\right),
\end{equation}
where $\mathcal K$ is a kernel function, $h_{\rm kde}>0$ denotes the bandwidth and $s$
represents a possible perturbation value in the direction $\hat{x}$.
This nonparametric procedure requires no \textit{a priori} assumption
that the perturbations follow a specific parametric distribution,
making it suitable for characterizing the pointwise marginal
distributions of non-Gaussian shape fluctuations. The overall statistical recovery procedure for both Gaussian and non-Gaussian perturbations is summarized as Algorithm~\ref{alg:statistical-recovery}.

\begin{algorithm}[htbp]
\small
\caption{Statistical recovery from reconstructed samples}
\label{alg:statistical-recovery}

\SetKwInOut{Input}{Input}
\SetKwInOut{Output}{Output}
\SetKw{Return}{return}

\Input{
Recovered sample radii $\{r_{j,{\rm rec}}\}_{j=1}^{M}$;
spherical quadrature grid $\{(\hat{x}_i,w_i)\}_{i=1}^{N}$;
maximum degree $L_{\max}$;
fitting index set $\mathcal I$;
KDE kernel $\mathcal K$ and bandwidth $h_{\rm kde}$.
}

\Output{
Recovered mean radius $\widebar r_{\rm rec}$;
centered perturbations $\{\rho_{j,{\rm rec}}\}_{j=1}^{M}$;
empirical covariance $\widehat C_{\rho}^{\rm rec}$;
and the corresponding Gaussian or non-Gaussian statistical estimates.
}

Compute the empirical mean radius and centered perturbations:
\[
\widebar r_{\rm rec}(\hat{x})
=
\frac{1}{M}
\sum_{j=1}^{M}
r_{j,{\rm rec}}(\hat{x}),
\qquad
\rho_{j,{\rm rec}}(\hat{x})
=
r_{j,{\rm rec}}(\hat{x})-\widebar r_{\rm rec}(\hat{x}).
\]

Form the perturbation matrix and its weighted version:
\[
R=
[\boldsymbol{\rho}_{1,{\rm rec}}\ \cdots\ \boldsymbol{\rho}_{M,{\rm rec}}],
\qquad
\boldsymbol{\rho}_{j,{\rm rec}}
=
\bigl(
\rho_{j,{\rm rec}}(\hat{x}_1),\ldots,
\rho_{j,{\rm rec}}(\hat{x}_N)
\bigr)^T,
\]
\[
W=\operatorname{diag}(w_1,\ldots,w_N),
\qquad
\widetilde R=W^{1/2}R.
\]

Compute the empirical covariance:
\[
\widehat C_{\rho}^{\rm rec}(\hat{x},\hat{y})
=
\frac{1}{M-1}
\sum_{j=1}^{M}
\rho_{j,{\rm rec}}(\hat{x})
\rho_{j,{\rm rec}}(\hat{y}).
\]

\If{a Gaussian perturbation model is assumed}{

    Compute the SVD of the weighted perturbation matrix:
    \[
       \widetilde R=U\Sigma V^T,
\qquad
\widehat\mu_a^{\rm rec}
=
\frac{s_a^2}{M-1},
\qquad
a=1,\ldots,\operatorname{rank}(\widetilde R).
    \]

    Group the individual empirical eigenvalues according to the multiplicity 
    $2\ell+1$:
    \[
        \widehat\lambda_\ell^{\rm rec}
        =
        \frac{1}{2\ell+1}
        \sum_{a=\ell^2+1}^{(\ell+1)^2}
        \widehat\mu_a^{\rm rec},
        \qquad
        \ell=0,\ldots,L_{\max},
        \quad
        (\ell+1)^2\leq r.
    \]

    Estimate the Gaussian covariance parameters by fitting
    \[
        \log \widehat\lambda_\ell^{\rm rec}
        \approx
        A-B\ell(\ell+1),
        \qquad \ell\in\mathcal I.
    \]

    Set
    \[
        \widehat\ell_c=2\sqrt{B_{\rm fit}},
        \qquad
        \widehat\sigma
        =
        \left(
        \frac{e^{A_{\rm fit}}}
        {\pi\widehat\ell_c^2}
        \right)^{1/2}.
    \]
}

\ElseIf{a non-Gaussian perturbation model is considered}{

    Estimate the angular covariance curve by averaging 
    $\widehat C_{\rho}^{\rm rec}(\hat{x}_i,\hat{x}_{i'})$ over direction pairs with 
    similar angular separation:
    \[
        \gamma_{ii'}
        =
        \arccos(\hat{x}_i\cdot\hat{x}_{i'}).
    \]

    Estimate representative marginal densities by KDE:
    \[
        \widehat f_{\hat{x}}^{\rm rec}(s)
        =
        \frac{1}{M h_{\rm kde}}
        \sum_{j=1}^{M}
        \mathcal K
        \left(
        \frac{s-\rho_{j,{\rm rec}}(\hat{x})}
        {h_{\rm kde}}
        \right).
    \]
}

\Return{
$\widebar r_{\rm rec}$,
$\{\rho_{j,{\rm rec}}\}_{j=1}^{M}$,
$\widehat C_{\rho}^{\rm rec}$
and the corresponding Gaussian or non-Gaussian statistical estimates
}\;

\end{algorithm}

\subsection{Statistical uniqueness for random obstacles}\label{sub1}
This section establishes a statistical uniqueness result for the inverse random  obstacle problem. Under suitable assumptions, the probability law of the uncertain obstacle boundary is uniquely determined by the distributional information contained in the random far-field data. As a consequence, the mean shape and the associated  statistics are uniquely determined.

Let $X = C^m(\mathbb S^2;\mathbb{R})$ be the shape space of radial functions where $m$ ensures the required smoothness of the star-shaped obstacle boundary. We consider the
class of radial functions $\mathcal A\subseteq X$ where
\begin{equation}\label{admiss}
\mathcal{A}
=
\left\{
r\in X:
0<r_-\leq r(\hat{x})\leq r_+
\ \text{for all } \hat{x}\in\mathbb{S}^2,
\quad
\|r\|_{C^m(\mathbb{S}^2)}\leq M
\right\},
\end{equation}
here $r_->0$, $r_+<\infty$, and $M<\infty$ are fixed constants. Each
$r\in\mathcal{A}$ determines the bounded star-shaped obstacle 
\begin{equation}
D_r
=
\left\{
x_0+\tau\hat{x}:
\hat{x}\in\mathbb{S}^2,\ 
0\leq \tau<r(\hat{x})
\right\},
\end{equation}
whose boundary is given by
\begin{equation}
\Gamma_r
=
\partial D_r
=
\left\{
x_0+r(\hat{x})\hat{x}:
\hat{x}\in\mathbb{S}^2
\right\}.
\end{equation}
For a fixed wavenumber $k>0$ and incident directions
$d_1,\ldots,d_{N_{\rm inc}}\in\mathbb S^2$, define the far-field space
\begin{equation}\label{farfieldspace}
Y
=
\prod_{t=1}^{N_{\rm inc}}
L^2(\mathbb S^2).
\end{equation}
and the deterministic far-field map
\begin{equation}\label{forwardmap}
\mathcal F:\mathcal A\to Y,
\qquad
\mathcal F(r)
=
\bigl(
u_r^\infty(\cdot,d_1;k),
\ldots,
u_r^\infty(\cdot,d_{N_{\rm inc}};k)
\bigr).
\end{equation}
Let $(\Omega_i,\mathscr{F}_i,\mathbb{P}_i)$, $i=1,2$, be two probability
spaces, and let
\begin{equation}
R_i:
(\Omega_i,\mathscr{F}_i)
\longrightarrow
(\mathcal{A},\mathcal{B}(\mathcal{A})),
\qquad i=1,2,
\end{equation}
be measurable random radial functions. For each $\omega_i\in\Omega_i$,
the realization $R_i(\omega_i)$ determines a deterministic star-shaped
obstacle. The corresponding random far-field data are defined by
\begin{equation}
U_i
=
\mathcal{F}(R_i),
\qquad i=1,2.
\end{equation}
Thus, each $U_i$ is a $Y$-valued random variable whose realizations
consist of the far-field patterns associated with the prescribed
incident directions. We are now in a position to state the statistical
uniqueness result:
\begin{theorem}\label{theo}
Let $\mathcal{A}$, $Y$, $R_i$, and $U_i$, $i=1,2$, be defined as above.
Suppose that at a fixed wavenumber $k>0$, the complete-aperture far-field
data are available for the prescribed incident directions
$d_1,\ldots,d_{N_{\rm inc}}\in\mathbb S^2$. If the corresponding $Y$-valued
random far-field data have the same probability law,
\begin{equation}\label{cond1}
\mathcal{L}(U_1)=\mathcal{L}(U_2)
\qquad \text{on }Y,
\end{equation}
then
\begin{equation}
\mathcal{L}(R_1)=\mathcal{L}(R_2)
\qquad \text{on }\mathcal{A}.
\end{equation}
In particular,
\begin{equation}
\mathbb{E}[R_1]
=
\mathbb{E}[R_2]
\qquad \text{in }X
\end{equation}
\end{theorem}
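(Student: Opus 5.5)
The strategy is to show that the deterministic far-field map $\mathcal F:\mathcal A\to Y$ in \eqref{forwardmap} is injective and has a measurable inverse on its range. The law identity then transfers directly from $Y$ to $\mathcal A$.

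\textbf{Injectivity.} Suppose $r_1,r_2\in\mathcal A$ satisfy $\mathcal F(r_1)=\mathcal F(r_2)$. Then the sound-soft obstacles $D_{r_1}$ and $D_{r_2}$ have identical far-field patterns $u^\infty(\cdot,d_t;k)$ for the prescribed incident directions. By Rellich's lemma and unique continuation, the scattered fields coincide in the unbounded component of the complement of $D_{r_1}\cup D_{r_2}$. The standard argument then applies. If $D_{r_1}\neq D_{r_2}$, one obtains a domain $D^*$, a component of $\mathbb R^3\setminus\overline{D_{r_1}\cup D_{r_2}}$ or of $D_{r_2}\setminus \overline{D_{r_1}}$. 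On $D^*$ the total field is a Dirichlet eigenfunction of $-\Delta$ with eigenvalue $k^2$.

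The Colton--Sleeman theorem \cite{MR729385} excludes this. The obstacles lie in the ball $B(x_0,r_+)$, and the total fields for distinct incident directions are linearly independent. The Faber--Krahn-type count in Colton--Sleeman bounds the number of linearly independent Dirichlet eigenfunctions with eigenvalue $k^2$ by a constant depending only on $kr_+$. Hence uniqueness holds provided $N_{\rm inc}$ exceeds that bound, for example $N_{\rm inc}=1$ when $kr_+<\pi$. I would state this as the standing hypothesis implicit in the theorem. Once $D_{r_1}=D_{r_2}$ is known, star-shapedness with respect to the common center $x_0$ gives $r_1=r_2$ pointwise, since $r(\hat x)=\sup\{\tau: x_0+\tau\hat x\in D_r\}$.

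\textbf{Measurable inverse.} This is the step I expect to be the main technical point. The set $\mathcal A$ is bounded in $C^m$ and closed in $C^{m-1}$. By Arzelà--Ascoli it is compact in $C^{m-1}(\mathbb S^2)$; I would take $m\ge 3$ and possibly work in the $C^{m-1}$ topology, or note that Borel sets coincide. The map $\mathcal F$ is continuous from $\mathcal A$ into $Y$, because of the continuous dependence of the boundary-integral solution \eqref{neumann} on $C^2$ boundary perturbations, compare the domain differentiability \cite{MR1203018}. A continuous injection from a compact space into the metric space $Y$ is a homeomorphism onto its compact image $\mathcal F(\mathcal A)$. Therefore $\mathcal F^{-1}:\mathcal F(\mathcal A)\to\mathcal A$ is Borel, and $\mathcal F(B)$ is Borel in $Y$ for every Borel set $B\subseteq\mathcal A$. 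Alternatively, the Lusin--Souslin theorem gives the same conclusion on Polish spaces without compactness.

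\textbf{Conclusion.} For any $B\in\mathcal B(\mathcal A)$ we have $\{R_i\in B\}=\{U_i\in\mathcal F(B)\}$ by injectivity. Hence
\[
\mathbb P_1(R_1\in B)=\mathbb P_1(U_1\in \mathcal F(B))=\mathbb P_2(U_2\in\mathcal F(B))=\mathbb P_2(R_2\in B),
\]
which proves $\mathcal L(R_1)=\mathcal L(R_2)$. Each $R_i$ takes values in the bounded set $\mathcal A$, so it is Bochner integrable. Its expectation is determined by its law; for example, $\mathbb E[R_i](\hat x)=\int_{\mathcal A} r(\hat x)\,d\mathcal L(R_i)(r)$ pointwise, and the law determines all such integrals. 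This gives $\mathbb E[R_1]=\mathbb E[R_2]$. The covariance functions $\int r(\hat x)r(\hat y)\,d\mathcal L(R_i)-\mathbb E[R_i](\hat x)\mathbb E[R_i](\hat y)$ agree likewise. Under \eqref{exp}, the common mean is exactly $\bar r$, which identifies the reference shape.
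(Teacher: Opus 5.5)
Your argument follows the paper's: injectivity of $\mathcal F$ from Colton--Sleeman (plus star-shapedness about the common center $x_0$), continuity via the domain derivative, Borel measurability of $\mathcal F^{-1}$ on $\mathcal F(\mathcal A)$, transfer of laws through $\{R_i\in B\}=\{U_i\in\mathcal F(B)\}$, and identification of the Bochner mean from the law. Your explicit requirement that $N_{\rm inc}$ exceed a count depending only on $kr_+$ (e.g.\ $N_{\rm inc}=1$ when $kr_+<\pi$) is a fair sharpening. The paper only says Colton--Sleeman ``applies'' to the prescribed configuration, and that theorem does need such a relation.

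However, one step of your primary measurability argument is false: $\mathcal A$ is not closed in $C^{m-1}(\mathbb S^2)$, and hence not compact there. A $C^{m-1}$-limit of radii with $\|r_n\|_{C^m}\le M$ only has Lipschitz $(m-1)$-st derivatives. It lies in $C^{m-1,1}$ but in general not in $C^m$. So Arzel\`a--Ascoli gives only relative compactness, and the step ``a continuous injection of a compact space is a homeomorphism onto its image'' does not apply to $\mathcal A$.

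You could repair this by working on the $C^{m-1}$-closure of $\mathcal A$. That requires checking continuity and Colton--Sleeman injectivity for $C^{m-1,1}$ radii, and that the $C^{m-1}$- and $C^m$-Borel $\sigma$-algebras agree on $\mathcal A$. The simpler fix is the Lusin--Souslin alternative you already mention, which is exactly what the paper uses. $\mathcal A$ is closed in the separable Banach space $C^m(\mathbb S^2)$, hence Polish. So the continuous injection $\mathcal F$ into the Polish space $Y$ maps Borel subsets of $\mathcal A$ to Borel subsets of $Y$.
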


\begin{proof}
Since $m<\infty$ and $\mathbb{S}^2$ is compact, the space $X=C^m(\mathbb{S}^2;\mathbb{R})$ endowed with the $C^m$-norm is a separable Banach space and hence a
Polish space. Moreover, $Y$ defined in \eqref{farfieldspace} is a finite product of separable Hilbert spaces and is therefore also
Polish. The constraints defining $\mathcal{A}$ are closed in $X$.
Consequently, $\mathcal{A}$ is a closed, and hence Polish, subspace of
$X$.

Next, we demonstrate that the forward map $\mathcal F:\mathcal A\to Y$ in \eqref{forwardmap} is continuous and injective. The continuity of $\mathcal F$ directly follows from the domain differentiability of the far-field operator under boundary
deformations \cite{MR1203018}.  Furthermore, under the constraints defining $\mathcal{A}$ in \eqref{admiss} and the prescribed
measurement configuration, the Colton–Sleeman uniqueness result
\cite{MR729385} applies. Hence, for any $r_1,r_2\in\mathcal{A}$,
\begin{equation}
\mathcal{F}(r_1)=\mathcal{F}(r_2)
\quad\Longrightarrow\quad
r_1=r_2.
\end{equation}
Therefore, $\mathcal{F}$ is continuous and injective on $\mathcal{A}$.

We next establish the Borel measurability of
$\mathcal{F}^{-1}:\mathcal{F}(\mathcal{A})\to\mathcal{A}$.
By definition, $\mathcal F^{-1}$ is Borel measurable if for every $B\in\mathcal B(\mathcal A)$, one has $(\mathcal F^{-1})^{-1}(B)\in\mathcal B(\mathcal F(\mathcal A))$. For such a Borel set $B$, we have
\begin{equation}
(\mathcal F^{-1})^{-1}(B)
=
\left\{
y\in \mathcal F(\mathcal A):\mathcal F^{-1}(y)\in B
\right\}
=
\mathcal F(B).
\end{equation}
Thus, it remains to show that $\mathcal F(B)$ is Borel in $\mathcal F(\mathcal A)$ for every $B\in\mathcal B(\mathcal A)$. This follows directly from the Lusin--Souslin theorem \cite{Kechris1995} since $\mathcal A$ is a Borel subset of the Polish space $X$, $Y$ is Polish, and $\mathcal F:\mathcal A\to Y$ is continuous and injective. Therefore,
\begin{equation}
\mathcal F^{-1}:\mathcal F(\mathcal A)\to\mathcal A
\end{equation}
is Borel measurable. 

Since $U_i=\mathcal{F}(R_i)$, each $U_i$ takes values in
$\mathcal{F}(\mathcal{A})$ and
\begin{equation}
R_i
=
\mathcal{F}^{-1}(U_i)
\qquad
\mathbb{P}_i\text{-a.s.},
\qquad i=1,2.
\end{equation}
The condition \eqref{cond1} together with the Borel measurability of $\mathcal{F}^{-1}$ yields
\begin{equation}\label{75}
\begin{aligned}
\mathcal{L}(R_1)=
\mathcal{L}\bigl(\mathcal{F}^{-1}(U_1)\bigr) =
\mathcal{L}\bigl(\mathcal{F}^{-1}(U_2)\bigr) =
\mathcal{L}(R_2).
\end{aligned}
\end{equation}

It remains to show the equality of the mean radial functions. Since $R_i$ takes values in $\mathcal{A}$, the uniform bound in the
definition of $\mathcal{A}$ gives
\begin{equation}
\int_{\Omega_i}
\|R_i(\omega_i)\|_X^p
\,d\mathbb{P}_i(\omega_i)
\leq M^p,
\qquad p=1,2,\quad i=1,2.
\end{equation}
Hence, $R_i\in L^2(\Omega_i;X)\subset L^1(\Omega_i;X)$
and the Bochner expectation of $R_i$ is well defined. Let $\mu_i=\mathcal{L}(R_i),$ $i=1,2.$
By the integration formula for pushforward measures and the equality $\mu_1=\mu_2$ in \eqref{75}, we obtain
\begin{equation}
\begin{aligned}
\mathbb E[R_1]
&=
\int_{\Omega_1}R_1(\omega_1)\,d\mathbb P_1(\omega_1)
=
\int_{\mathcal A}r\,d\mu_1(r) 
=
\int_{\mathcal A}r\,d\mu_2(r)
=
\int_{\Omega_2}R_2(\omega_2)\,d\mathbb P_2(\omega_2)
=
\mathbb E[R_2]
\qquad \text{in }X.
\end{aligned}
\end{equation}
This completes the proof.
\end{proof}

 The equality in law established in \autoref{theo} further implies the uniqueness of the second-order statistics of the random radial function, as stated in the following corollary:
\begin{corollary}
\label{cor:second-order-uniqueness}
    Under the assumptions of \autoref{theo}, let
\begin{equation}
\bar{r}_i=\mathbb{E}[R_i],
\qquad
\rho_i=R_i-\bar{r}_i,
\qquad i=1,2.
\end{equation}
Then the covariance kernels of the centered random radial functions
coincide:
\begin{equation}
C_{\rho_1}(\hat{x},\hat{y})
=
C_{\rho_2}(\hat{x},\hat{y}),
\qquad
\hat{x},\hat{y}\in\mathbb{S}^2.
\end{equation}
Consequently, their pointwise variance functions and the corresponding
covariance operators on $L^2(\mathbb{S}^2)$ are identical. Hence, the
KL eigenvalues coincide, including their multiplicities, and the
associated eigenspaces are the same.
\end{corollary}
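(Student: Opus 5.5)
We prove Corollary~\ref{cor:second-order-uniqueness} by writing the covariance kernel as an integral against the law of $R_i$, exactly as was done for the mean at the end of the proof of \autoref{theo}. Let $\mu=\mathcal L(R_1)=\mathcal L(R_2)$ on $\mathcal A$, which holds by \autoref{theo}. Its consequence $\bar r_1=\bar r_2=:\bar r\in X$ holds as well.

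Fix $\hat x,\hat y\in\mathbb S^2$ and consider the point evaluation $\delta_{\hat x}:X\to\mathbb R$, $r\mapsto r(\hat x)$. This map is a continuous linear functional, since $|r(\hat x)|\le\|r\|_{C^m}$. Two consequences follow.

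\emph{Pointwise values of the mean.} Bochner expectation commutes with bounded linear functionals, so $\bar r_i(\hat x)=\mathbb E[R_i(\hat x)]$. Hence $\rho_i(\hat x)=R_i(\hat x)-\bar r(\hat x)$.

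\emph{Integrability.} Every $r\in\mathcal A$ satisfies $0<r_-\le r\le r_+$. The random variables $R_i(\hat x)$ are therefore bounded, and all second moments exist.

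The covariance is then
\begin{equation*}
C_{\rho_i}(\hat x,\hat y)=\mathbb E\bigl[(R_i(\hat x)-\bar r(\hat x))(R_i(\hat y)-\bar r(\hat y))\bigr]
=\int_{\mathcal A}\bigl(r(\hat x)-\bar r(\hat x)\bigr)\bigl(r(\hat y)-\bar r(\hat y)\bigr)\,d\mu(r).
\end{equation*}
The second equality is the change-of-variables formula for the pushforward measure. It applies because the integrand $g(r)=(\delta_{\hat x}r-\bar r(\hat x))(\delta_{\hat y}r-\bar r(\hat y))$ is continuous, hence Borel, and bounded on $\mathcal A$. The right-hand side depends on $i$ only through $\mu$ and $\bar r$, and both are common to $i=1,2$. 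This gives $C_{\rho_1}=C_{\rho_2}$ pointwise. Setting $\hat y=\hat x$ gives equality of the variance functions.

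Next, the covariance operators. Since $C_{\rho_i}$ is bounded (by $(r_+-r_-)^2$, for instance), it lies in $L^2(\mathbb S^2\times\mathbb S^2)$. The integral operator $\mathcal C_{\rho_i}$ is therefore a well-defined Hilbert--Schmidt, self-adjoint, nonnegative operator on $L^2(\mathbb S^2)$. Identical kernels give identical operators, $\mathcal C_{\rho_1}=\mathcal C_{\rho_2}$.

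For compact self-adjoint operators, the spectrum, the multiplicities of the nonzero eigenvalues, and the eigenspaces $\ker(\mathcal C-\lambda I)$ are all determined by the operator itself. So the KL eigenvalues, including multiplicities, and the associated eigenspaces coincide. Eigenfunctions are unique only up to the choice of basis within each eigenspace, which is why the statement is phrased in terms of eigenspaces.

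The only step needing care is the justification that point evaluation commutes with the Bochner mean and that the integrand is measurable and integrable. Both follow from the continuity of $\delta_{\hat x}$ on $C^m$ together with the uniform bounds built into $\mathcal A$, so I expect no real obstacle.
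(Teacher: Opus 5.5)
Your proof is correct and follows the same route as the paper: both write $C_{\rho_i}(\hat x,\hat y)$ as the integral of $(r(\hat x)-\bar r(\hat x))(r(\hat y)-\bar r(\hat y))$ against the common law $\mu$, using the common mean $\bar r$ from \autoref{theo}, and then deduce equality of the variances, the covariance operators and their spectral data. Your extra justifications (continuity of point evaluation on $C^m$, which gives $\bar r_i(\hat x)=\mathbb E[R_i(\hat x)]$; boundedness of the integrand from $r_-\le r\le r_+$; and the Hilbert--Schmidt property of the kernel) fill in details the paper leaves implicit.
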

\begin{proof}

Since $\mu=\mathcal{L}(R_1)=\mathcal{L}(R_2)$ and 
$\bar{r}=\mathbb{E}[R_1]=\mathbb{E}[R_2]$,  for $i=1,2$ and
$\hat{x},\hat{y}\in\mathbb{S}^2$, 
\begin{equation}
C_{\rho_i}(\hat{x},\hat{y})
=
\int_{\mathcal{A}}
\bigl(r(\hat{x})-\bar{r}(\hat{x})\bigr)
\bigl(r(\hat{y})-\bar{r}(\hat{y})\bigr)
\,d\mu(r).
\end{equation}
Hence the covariance kernels coincide. In particular, by setting
$\hat{x}=\hat{y}$, the corresponding pointwise variance functions are
also identical. Therefore, the associated covariance operators
are uniquely determined. Consequently,  their eigenvalues including multiplicities coincide.
\end{proof}

\autoref{theo} shows that the probability law of the random far-field
data uniquely determines that of the random radial function. Under the
zero-mean perturbation model
\begin{equation}
R=\bar{r}+\rho,
\qquad
\mathbb{E}[\rho]=0,
\end{equation}
one has $\mathbb{E}[R]=\bar{r}$. Thus, the unique determination of the
Bochner mean of $R$ yields the uniqueness of the deterministic reference
shape. In addition, Corollary \autoref{cor:second-order-uniqueness} establishes
the unique determination of the covariance kernel, pointwise variance
function, covariance operator and KL eigenvalues of the perturbation
field. These results provide a theoretical foundation for the
statistical quantities recovered by the proposed reconstruction method.

\begin{remark}
It should be noted that \eqref{cond1} is a relatively strong
statistical assumption, as it requires equality of the probability
laws of the $Y$-valued random far-field data rather than only of
finitely many statistical quantities. In practice, these laws can only
be approximated from  a sufficiently
large number of independent realizations. This
formulation reflects the difficulty arising from the nonlinear forward
map $\mathcal{F}$ and the underlying deterministic inverse problem.
Establishing uniqueness under weaker statistical assumptions remains
an important topic for future study.
\end{remark}

\section{Numerical experiments} \label{Numerical_experiments}
In this section, we present four numerical studies to demonstrate the performance of the proposed reconstruction method. The first three examples focus on the quantitative inversion of Gaussian random fields, while the last one  considers obstacles with non‑Gaussian random perturbations:
\begin{enumerate}

\item The first example investigates the effect of sample size on
statistical recovery. Since the proposed method relies on Monte Carlo
estimation, sufficiently many reconstructed samples are required for
reliable recovery of the mean shape and covariance-related quantities,
whereas too few samples may reduce the accuracy of these estimates.

\item The second example considers several representative geometries
and demonstrates the effectiveness of the proposed algorithm in
recovering both deterministic shape features and the associated
random-field statistics.

\item The third example considers a more complex scatterer and examines
the effectiveness of the multi-frequency continuation strategy.
Comparisons with single-frequency reconstructions demonstrate that
multiple frequencies improve both shape recovery and statistical
estimation.

\item The final example addresses non-Gaussian random perturbations
generated by a digital-filter-based construction. It demonstrates that
the proposed method is not restricted to Gaussian random fields and can
also recover important statistical features of more general
perturbations, including empirical covariance structures and marginal
radius distributions at selected directions.
\end{enumerate}

\subsection{Experiment Setup} \label{experiment_setup}


In the following numerical experiments, unless otherwise specified, the synthetic far-field data are generated using six incident directions $d \in \{\pm e_1,\pm e_2,\pm e_3\}$ where \(e_1,e_2,e_3\) are the standard Cartesian basis vectors in \(\mathbb{R}^3\). For simpler geometries, fewer incident directions may also be sufficient. To avoid the inverse crime, the synthetic far-field data are generated by a combined-field potential ansatz on a finer spatial discretization rather than by the $\frac{\partial u}{\partial \nu}$-based Neumann-data formulation used in \autoref{sub2}. In addition, distinct spatial discretizations are employed and the shape representation as well as the number of unknown parameters are chosen differently for the forward and inverse problems. Precisely, the forward data are computed on a $30\times 60$ $(\mu,\phi)$-grid while the inverse reconstruction is carried out on a coarser $28\times 56$ grid. In the Gaussian random field case, the forward random perturbation is generated by a KL expansion truncated at $L_{\rm KL}=10$ (121 modes), whereas the inverse radial update is expanded only up to $L_{\rm inv}=7$ (64 unknown coefficients). The eigenvalues $\lambda_\ell$ in the forward scattering are computed by the Funk--Hecke formula \cite{dai2013approximation}. All synthetic far-field data are corrupted with 5\% relative noise. Following the decreasing regularization strategy of the iteratively regularized Gauss–Newton method, we set
\[
 \alpha_n=\gamma_n\frac{\operatorname{trace}(J_n^TJ_n)}{N_{\rm coeff}},\qquad\gamma_0=3\times10^{-2}\qquad\gamma_{n+1}\approx \frac13\gamma_n .\]
 The trace factor is used to normalize the scale of the linearized system so that the prescribed sequence $\gamma_n$ gives a comparable relative regularization strength across different frequency stages. The regularization matrix $\mathcal R$ is chosen to be the Sobolev-type diagonal matrix
$\mathcal R=\operatorname{diag}\left((1+\ell^2)^{s/2}\right)$ as in \cite{MR3933278}. We set  $ s=1.5$. The parameters $\eta_1$ and $\eta_2$ are both set to $k/2$ where $k$ denotes the frequency at the current iteration stage.
For the basic test geometries considered in the numerical experiments, we use several representative smooth star-shaped shapes, as illustrated in \autoref{simpleob}. Their radial parametrizations are given as follows. Let
\[
\hat x(\theta,\phi)
=
(\sin\theta\cos\phi,\sin\theta\sin\phi,\cos\theta),
\qquad
x(\theta,\phi)=r(\theta,\phi)\hat x(\theta,\phi).
\]
\begin{figure}
    \centering
       \includegraphics[width=\linewidth]{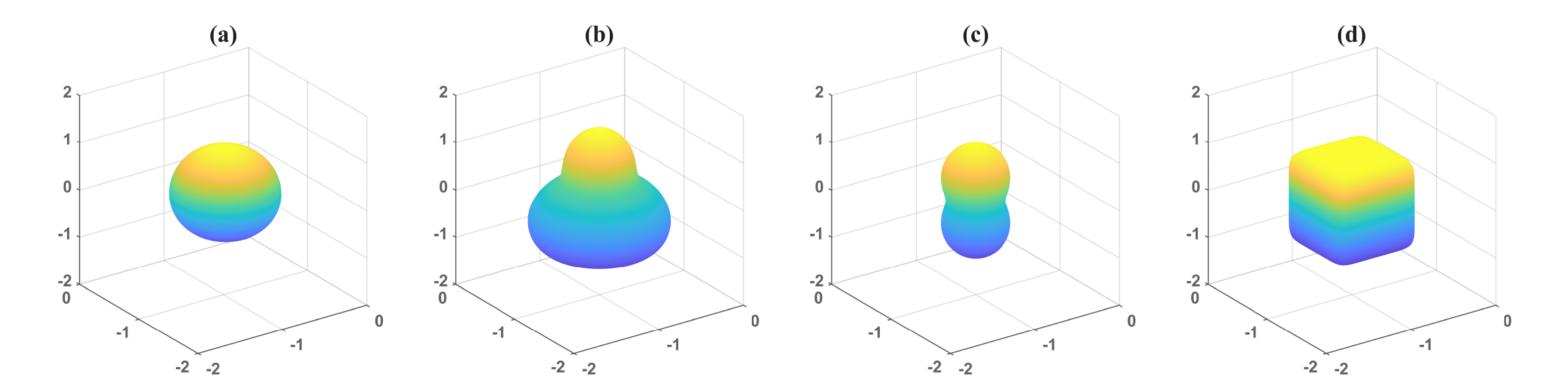}
        \caption{\scriptsize Several representative  obstacle shapes. (a): egg-shaped (b): pear-shaped. (c): peanut-shaped. (d)  rounded cube-shaped}
        \label{simpleob}
    \end{figure}
 \noindent Then the radius functions are specified by
{\small
\begin{enumerate}
    \item \textbf{Egg-shaped obstacle:}
    \(r_{\rm egg}(\theta,\phi)
    =
    1+0.18\cos\theta-0.08\cos(2\theta)
    +0.05\sin^2\theta\cos\phi.\)

    \item \textbf{Pear-shaped obstacle:}
    \(r_{\rm pear}(\theta)
    =
    0.60\sqrt{4.25+2\cos(3\theta)}.\)

    \item \textbf{Peanut-shaped obstacle:}
    \(r_{\rm peanut}(\theta)
    =
    1.17\sqrt{\cos^2\theta+0.25\sin^2\theta}.\)

    \item \textbf{Rounded cube-shaped obstacle:}
    \(r_{\rm cube}(\theta,\phi)
    =
    \dfrac{0.95}{
    \left(
    |\sin\theta\cos\phi|^8+
    |\sin\theta\sin\phi|^8+
    |\cos\theta|^8
    \right)^{1/8}}.\)
\end{enumerate}
}

\begin{figure}
    \centering
       \includegraphics[width=0.8\linewidth]{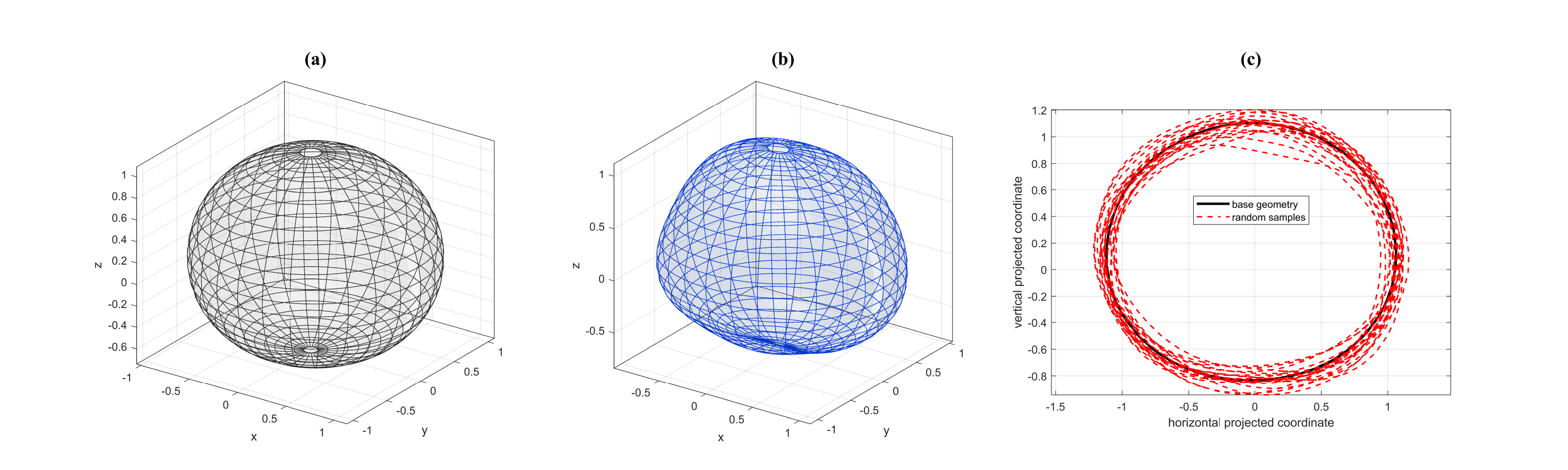}
        \caption{\scriptsize Random realizations of egg-like scatterer. (a): true geometry (without random perturbations). (b): a sample realization with random perturbations. (c): cross-sectional comparison between the perturbed sample (dashed line) and the true shape (solid line).}
        \label{eggg}
    \end{figure}
 \subsection{Example 1: role of  sample size in reconstruction}
 \label{example:circle}
This numerical example considers Gaussian random perturbations of an egg-shaped scatterer. Random realizations are generated with parameters $\sigma=0.06$ and $\ell_c=0.8$, corresponding to a relatively mild perturbation regime, referred to as case~\uppercase\expandafter{\romannumeral1} below. A rougher perturbation regime will be considered later and denoted by case~\uppercase\expandafter{\romannumeral2}. Several realizations of the resulting random scatterer are shown in \autoref{eggg}, and \autoref{eggg}(c) presents a cross-sectional comparison between the perturbed surfaces and the reference geometry. The figure indicates that, in case~\uppercase\expandafter{\romannumeral1}, the random perturbation has a relatively small amplitude compared with the overall size of the scatterer.

We first apply the proposed algorithm  to reconstruct the random egg-shaped scatterer. Due to the moderate geometric complexity of this example, far-field data at four wavenumbers ($k=3,4,5,6$) are sufficient to produce the accurate reconstructions shown in \autoref{reegg}.

\begin{figure}
    \centering
       \includegraphics[width=0.7\linewidth]{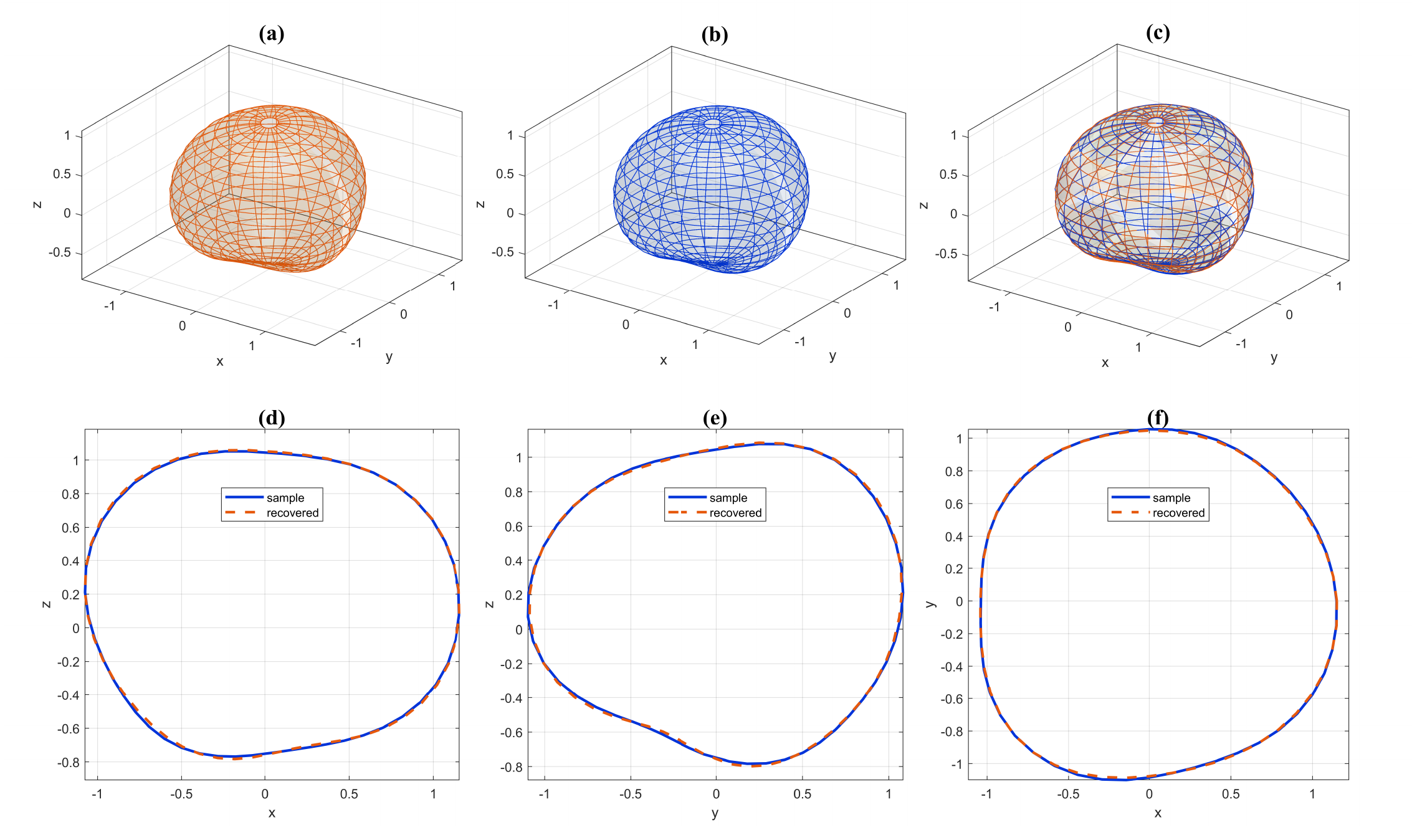}
        \caption{\scriptsize Reconstruction comparison. (a): reconstructed shape of one  sample. (b): random sample. (c): comparison between the sample and its reconstruction. (d): $x$-$z$ section comparison, $\phi$ = 0. (e): $y$-$z$ section comparison, $\phi$ = 1.57. (f): $x$-$y$ equatorial section comparison.}
        \label{reegg}
    \end{figure}

Since the proposed method relies on a Monte Carlo strategy, the number of samples plays a crucial role in stable statistical recovery. To illustrate this effect, \autoref{KLeggnew}(c) reports the relative error of the reconstructed mean shape as the number of samples increases, shown by the blue solid curve. The error decreases rapidly for small sample sizes and then stabilizes when sufficiently many samples are used, indicating that the sample mean provides an accurate approximation of the underlying reference geometry. The red dashed curve shows the corresponding discrepancy between the recovered and true KL eigenvalues. Its overall decreasing trend further demonstrates that an adequate sample size is essential for reliably recovering not only the mean shape but also the statistical features of the random perturbation field.

\begin{figure}
    \centering
       \includegraphics[width=0.9\linewidth]{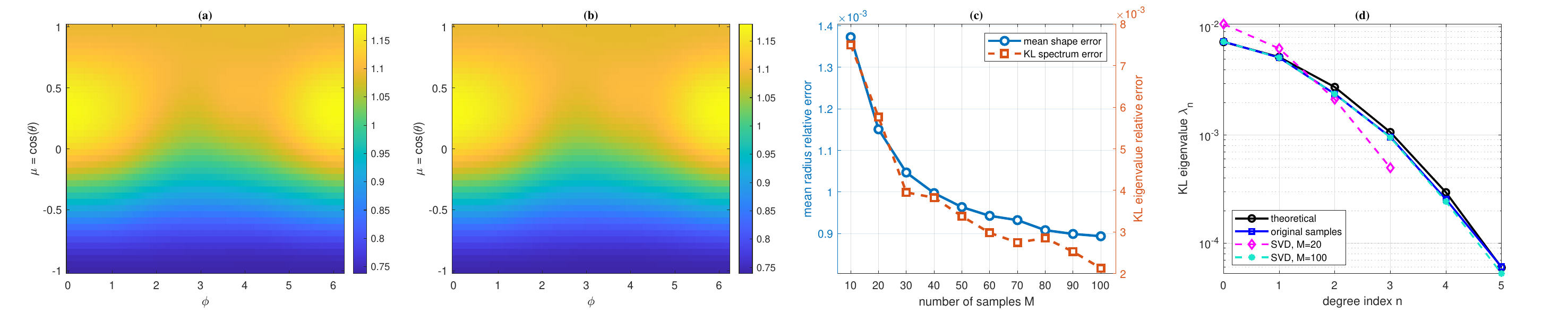}
        \caption{\scriptsize (a): true mean radius
  (b): recovered mean radius using all available reconstructed samples, $M$ = 100.
  (c): sample-size effect: mean-shape relative error and KL-eigenvalue relative error versus $M$.
  (d):  KL eigenvalues estimation with 20 samples and 100 samples. }
        \label{KLeggnew}
    \end{figure}

For the recovery of the covariance hyper-parameters $\sigma$ and $\ell_c$ defined in \eqref{cov}, the comparison results are reported in \autoref{tab:hyperparameter_estimat}. The true (preset) values denote the prescribed theoretical input hyper-parameters, the reference values are obtained by applying our inversion method to the true geometry (serving as a reference), and the estimate values are the sample-based reconstructed hyper-parameters.
 These results further confirm that a sufficiently large number of samples is crucial for achieving reliable shape reconstruction, KL-spectrum estimation and hyper-parameter recovery. 
 
 We next consider a rougher perturbation regime by increasing the variance parameter to $\sigma=0.08$ and reducing the correlation length to $\ell_c=0.6$ (case \uppercase\expandafter{\romannumeral2}). This setting leads to stronger local shape fluctuations and weaker spatial correlation between different  points, as can be clearly observed from the sample realization shown in  \autoref{exam11}(a).

\begin{table}[htbp]
\centering
\tiny
\renewcommand{\arraystretch}{1.18}
\setlength{\tabcolsep}{4pt}
\setlength{\heavyrulewidth}{1.05pt}
\setlength{\lightrulewidth}{0.55pt}
\setlength{\cmidrulewidth}{0.55pt}

\begin{tabularx}{\textwidth}{
ll
>{\centering\arraybackslash}p{1.7cm}
>{\centering\arraybackslash}p{1.5cm}
>{\centering\arraybackslash}X
>{\centering\arraybackslash}X
>{\centering\arraybackslash}X
}
\toprule
&& \makecell{True(Preset)}
& Reference
& \makecell{Estimated(20 samples)}
& \makecell{Estimated(50 samples)}
& \makecell{Estimated(100 samples)} \\
\midrule

\multirow{2}{*}{case \uppercase\expandafter{\romannumeral1}}
& $\sigma$  & 0.0600 & 0.0588 & 0.0570 (95\%) & 0.0580 (97\%)& \textbf{0.0579 (97\%)} \\
& $\ell_c$ & 0.80   & 0.78  & 1.01 (74\%)  & 0.85  (94\%) & \textbf{0.81 (99\%)} \\
\midrule

&& \makecell{True (Preset)}
& Reference
& \makecell{Estimated(50 samples)}
& \makecell{Estimated(100 samples)}
& \makecell{Estimated(300 samples)} \\
\midrule

\multirow{2}{*}{case \uppercase\expandafter{\romannumeral2}}
& $\sigma$  & 0.0800 & 0.0780 & 0.0780 (98\%) & 0.0765 (96\%)& \textbf{0.0773 (97\%)} \\
& $\ell_c$ & 0.60   & 0.60   & 0.72  (80\%) & 0.66 (90\%)  & \textbf{0.62 (97\%)} \\
\bottomrule
\end{tabularx}

\caption{\scriptsize  Comparison of estimated hyper-parameters under two settings with different sample sizes. True (Preset) denotes the ground-truth covariance parameters, while Reference denotes estimates recovered via inversion with the true geometric configuration.}
\label{tab:hyperparameter_estimat}
\end{table}

\begin{figure}
    \centering
       \includegraphics[width=0.9\linewidth]{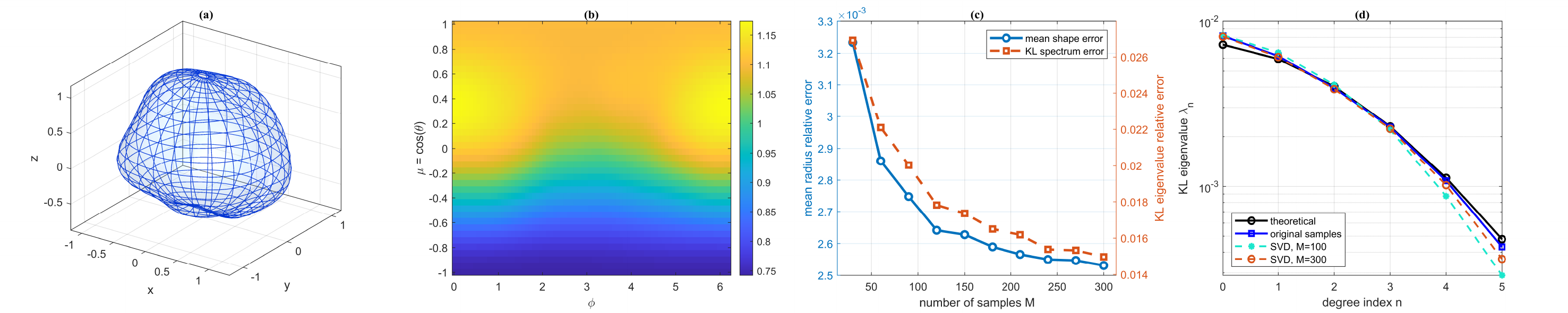}
        \caption{\scriptsize  (a): original random sample geometry($\# 1$).
  (b): recovered mean radius using 300 reconstructed samples.
  (c): sample-size effect: mean-shape error and KL eigenvalue error versus $M$. (d): KL eigenvalues estimation with 100 and 300 samples}
        \label{exam11}
    \end{figure}

\autoref{exam11}(b) shows the sample-averaged reconstructed radius under case \uppercase\expandafter{\romannumeral2}. The corresponding hyper-parameter recovery results are also reported in \autoref{tab:hyperparameter_estimat}. Compared with the previous regime, the larger fluctuation amplitude and shorter correlation length make the statistical recovery more challenging. Consequently, a larger number of samples is needed to maintain sufficient accuracy in the reconstructed mean shape, KL spectrum and covariance hyper-parameters.

\begin{figure}
\begin{center}
    \begin{overpic}[width=0.9\textwidth, tics=10]{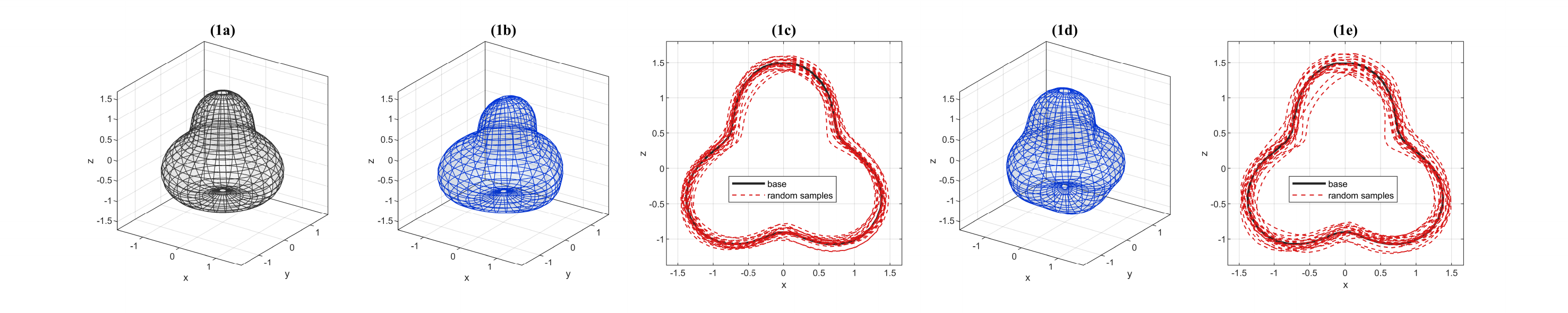}
    \end{overpic}
\end{center}
 \begin{center}
    \begin{overpic}[width=0.9\textwidth, tics=10]{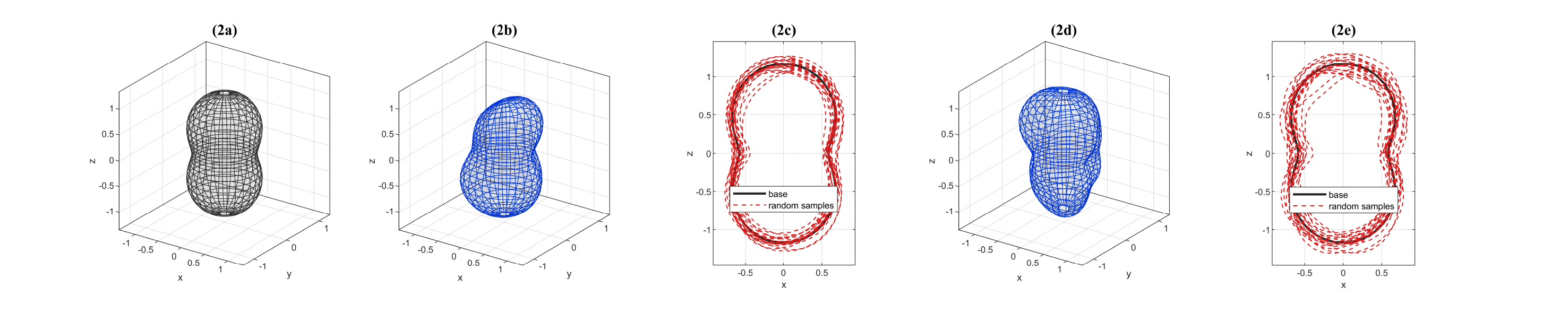}
    \end{overpic}
\end{center}
 \begin{center}
    \begin{overpic}[width=0.9\textwidth, tics=10]{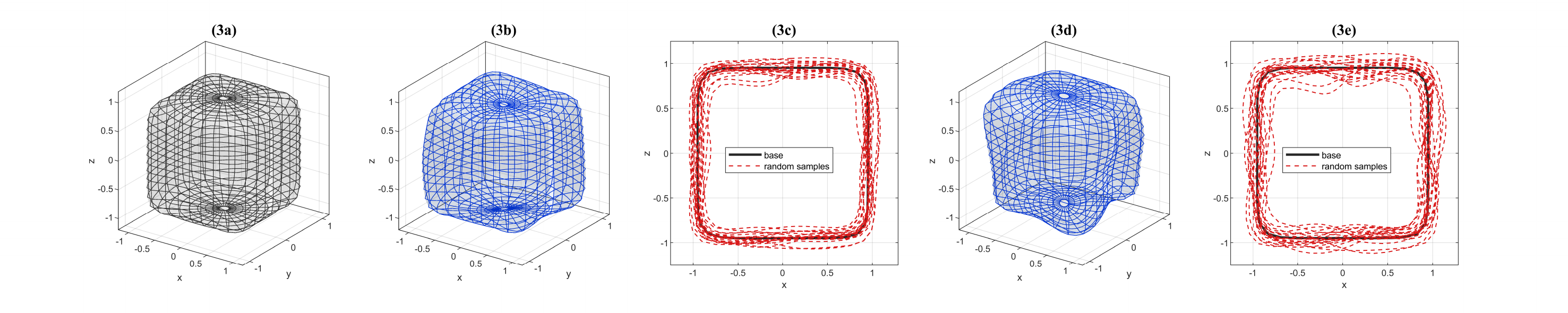}
    \end{overpic}
\end{center}
 \vspace{-0.5cm}
    \caption{\scriptsize (1a)(2a)(3a): true shape. (1b)(2b)(3b): one random sample realization under parameter setting \uppercase\expandafter{\romannumeral1}  ($\sigma$ = 0.06, $\ell_c$= 0.8). (1c)(2c)(3c): $x$-$z$ section of all random samples under parameter set \uppercase\expandafter{\romannumeral1}  ($\sigma$ = 0.06, $\ell_c$= 0.8). (1d)(2d)(3d): one random sample realization under case \uppercase\expandafter{\romannumeral2} ($\sigma$ = 0.08, $\ell_c$= 0.6). (1e)(2e)(3e): $x$-$z$ section of all random samples under case \uppercase\expandafter{\romannumeral2}  ($\sigma$ = 0.08, $\ell_c$= 0.6).}
    \label{exam2}
      \end{figure}

\subsection{Example 2: reconstruction for several representative shapes}

This example extends the quantitative inversion framework to the remaining scatterer configurations illustrated in \autoref{simpleob}, with the uncertain perturbations still modeled as a Gaussian isotropic random field. 


 \begin{figure}
    \begin{center}
    \begin{overpic}[width=0.9\textwidth,tics=10]{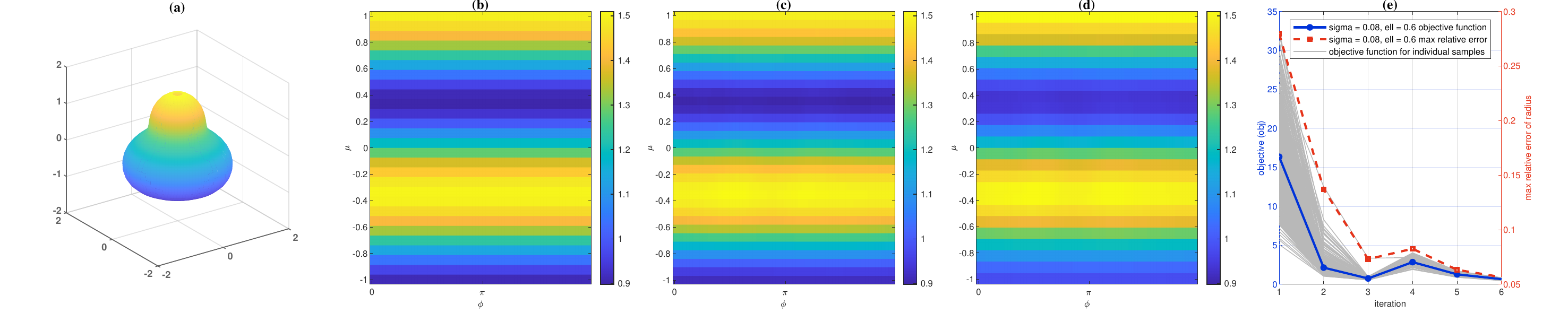}
    \end{overpic}
    \end{center}
    \begin{center}
        \begin{overpic}[width=0.9\textwidth,tics=10]{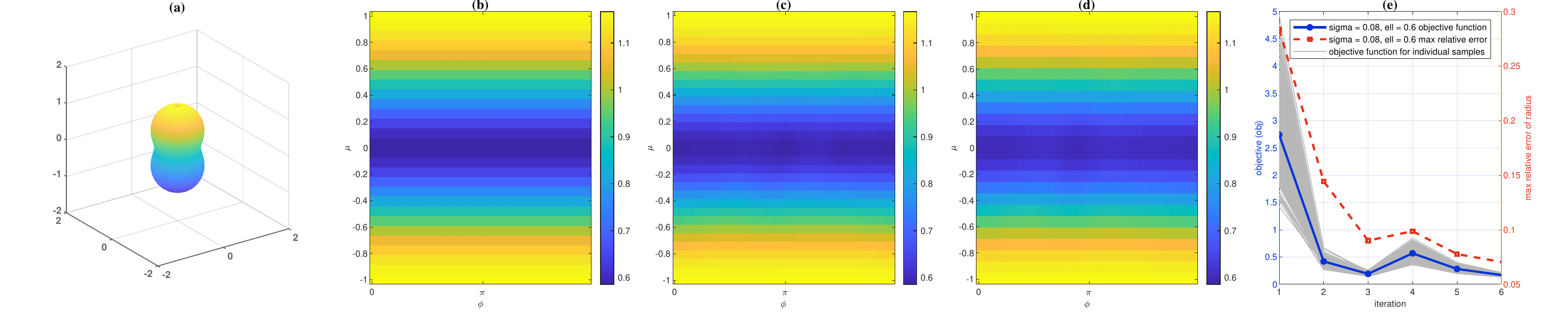}
      \end{overpic}
    \end{center}
\begin{center}
    \begin{overpic}[width=0.9\textwidth, tics=10]{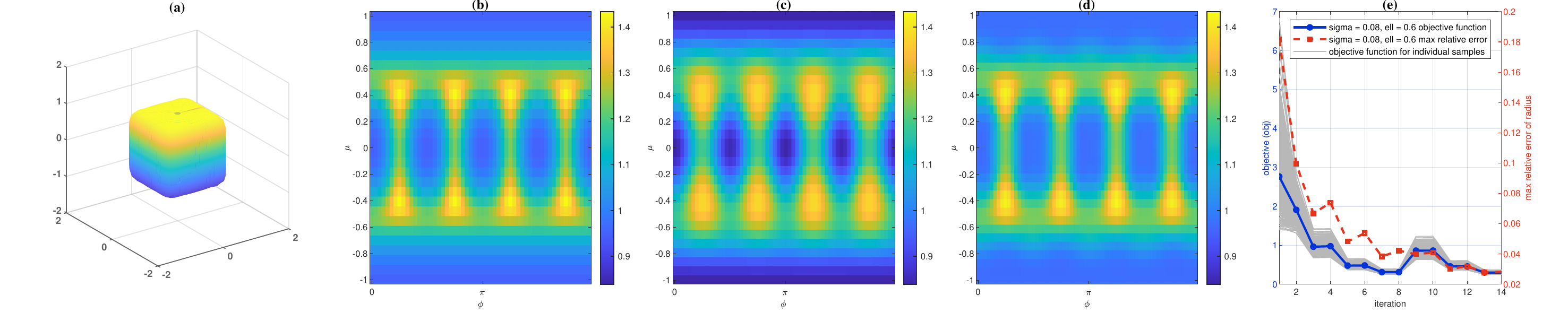}
    \end{overpic}
\end{center}
\vspace{0.6cm}
      \vspace{-0.2cm}
    \caption{ \scriptsize   (a): true geometry.
  (b): true base radius.
  (c): mean recovered radius  under case \uppercase\expandafter{\romannumeral1}: $\sigma$ = 0.06, $\ell_c$ = 0.8.
  (d): mean recovered radius  under case \uppercase\expandafter{\romannumeral2}: $\sigma$ = 0.08, $\ell_c$ = 0.6.
  (e): loss versus iteration under case \uppercase\expandafter{\romannumeral2}. blue solid line: objective function (left blue axis); red dashed line: max relative error of radius (right red axis); gray solid line: objective function for individual samples.}
    \label{example2}
      \end{figure}

  \begin{figure}
\begin{center}
    \begin{overpic}[width=0.6\textwidth, tics=10]{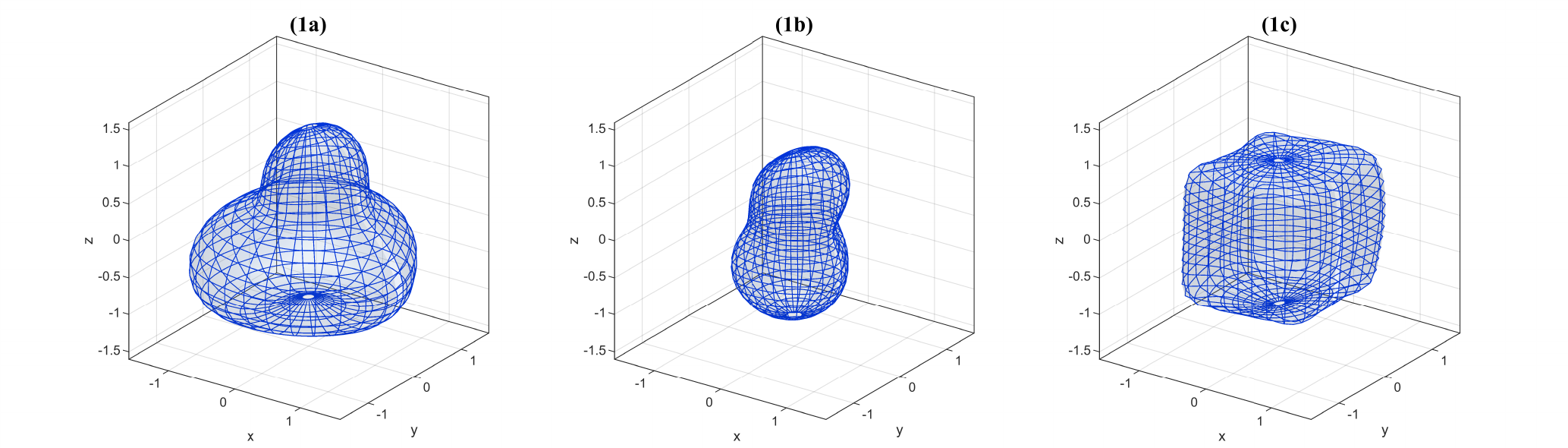}
    \end{overpic}
\end{center}
 \begin{center}
    \begin{overpic}[width=0.6\textwidth, tics=10]{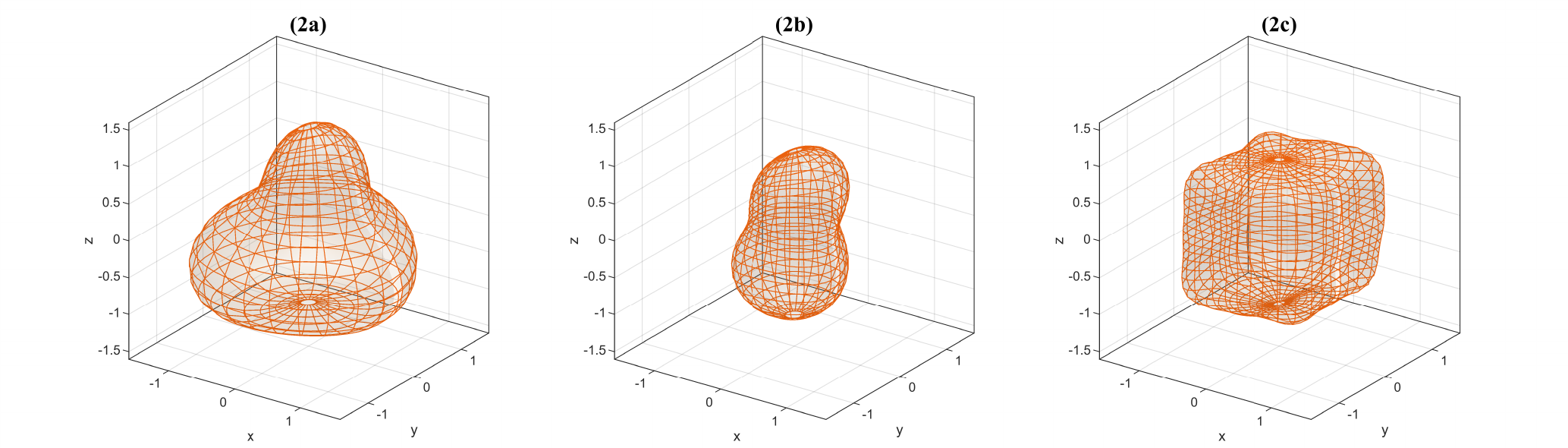}
    \end{overpic}
\end{center}
 \begin{center}
    \begin{overpic}[width=0.6\textwidth, tics=10]{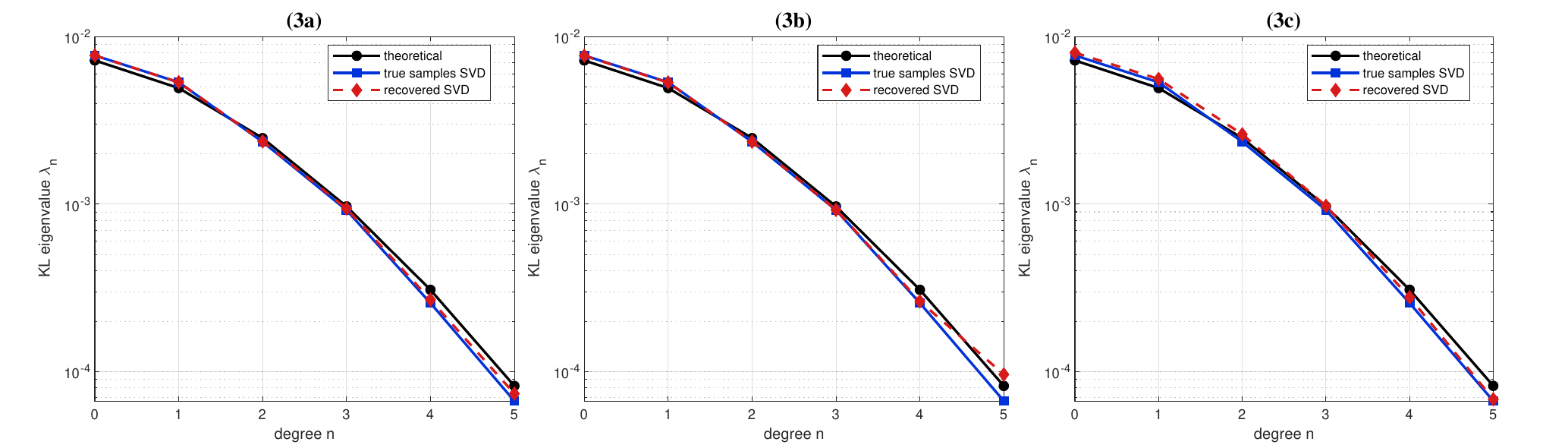}
    \end{overpic}
\end{center}
 \begin{center}
    \begin{overpic}[width=0.6\textwidth, tics=10]{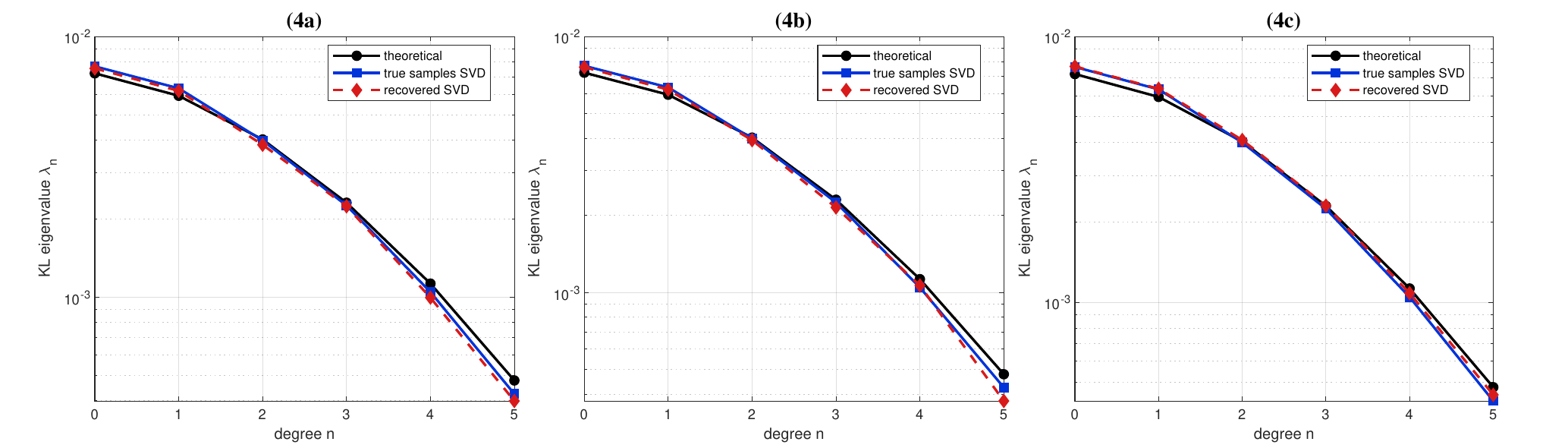}
    \end{overpic}
\end{center}
    \vspace{-0.3cm}
    \caption{\scriptsize  Comparison of inversion for random samples and reconstruction of KL eigenvalues. (1a): random pear sample (\#10), $\sigma$ = 0.06, $\ell_c$ = 0.8.
  (1b): random peanut sample (\#10),  $\sigma$ = 0.06, $\ell_c$ = 0.8.
  (1c): random rounded cube sample (\#10),  $\sigma$ = 0.06, $\ell_c$ = 0.8. (3a): pear, case \uppercase\expandafter{\romannumeral1}  KL comparison, $\sigma$ = 0.06, $\ell_c$ = 0.8, $M $= 200.   (2a): recovered pear shape, sampleId = 13.
  (2b): recovered peanut shape(\#10).
  (2c): recovered rounded cube shape(\#10).
  (3b): peanut, case \uppercase\expandafter{\romannumeral1}  KL comparison, $\sigma$ = 0.06, $\ell_c$ = 0.8, $M$ = 200.
  (3c): rounded cube, case \uppercase\expandafter{\romannumeral1}  KL comparison, $\sigma$ = 0.06, $\ell_c$ = 0.8, $M$ = 200. (4a): pear, case \uppercase\expandafter{\romannumeral2} KL comparison, $\sigma$= 0.08, $\ell_c$  = 0.6, $M $ = 500.
  (4b): peanut, case \uppercase\expandafter{\romannumeral2}  KL comparison, $\sigma$ = 0.08, $\ell_c$  = 0.6, $M $= 500.
  (4c): rounded cube, case \uppercase\expandafter{\romannumeral2}  KL comparison, $\sigma$ = 0.08, $\ell_c$  = 0.6, $M $ = 500.}
    \label{KLdi}
      \end{figure}
      
\begin{table}[htbp]
\centering
\tiny
\renewcommand{\arraystretch}{1.15}
\setlength{\tabcolsep}{3pt}

\begin{tabular}{llcccccccc}
\toprule
&& \multicolumn{4}{c}{case \uppercase\expandafter{\romannumeral1} ($\sigma=0.060$, $\ell_c=0.80$)}
& \multicolumn{4}{c}{case \uppercase\expandafter{\romannumeral2} ($\sigma=0.080$, $\ell_c=0.60$)} \\
\cmidrule(lr){3-6} \cmidrule(lr){7-10}
&& \shortstack{True\\\textup{(Preset)}}&  \shortstack{Reference\\\textup{}}& \shortstack{Estimate\\\textup{50 samples}} & \shortstack{Estimate\\100 samples}
& \shortstack{True\\\textup{(Preset)}} & \shortstack{Reference\\\textup{}}& \shortstack{Estimate\\\textup{200 samples}}& \shortstack{Estimate\\500 samples} \\
\midrule

\multirow{2}{*}{pear}
& $\sigma$  & 0.060 & 0.059 & 0.060 (100\%) & \textbf{0.060 (100\%)} & 0.080 & 0.078 & 0.076 (95\%) & \textbf{0.077 (99\%)} \\
& $\ell_c$  & 0.80  & 0.80  & 0.83 (96\%) & \textbf{0.80 (100\%)}  & 0.60  & 0.62  & 0.64 (93\%)& \textbf{0.62 (97\%)}  \\
\addlinespace

\multirow{2}{*}{peanut}
& $\sigma$  & 0.060 & 0.059  & 0.060 (100\%) & \textbf{0.060 (100\%)}  & 0.080 & 0.078 & 0.077 (96\%)& \textbf{0.077 (96\%)} \\
& $\ell_c$  & 0.80  & 0.80  & 0.78 (98\%)& \textbf{0.79 (99\%)}  & 0.60  & 0.62  & 0.65 (92\%) & \textbf{0.63 (95\%)}  \\
\addlinespace

\multirow{2}{*}{\shortstack{rounded\\square}}
& $\sigma$  & 0.060 & 0.059  & 0.062 (97\%)& \textbf{0.061 (98\%)}  & 0.080 & 0.078 & 0.079 (99\%)& \textbf{0.079 (99\%)} \\
& $\ell_c$  & 0.80  & 0.80  & 0.85 (94\%) & \textbf{0.81 (99\%)}  & 0.60  & 0.62  & 0.64 (93\%) & \textbf{0.61 (98\%)}  \\
\bottomrule
\end{tabular}

\caption{\scriptsize Estimated hyper-parameters for different base geometries.}
\label{tab:hyperparameter_geometries}
\end{table}

We again consider two parameter settings: one moderate (case \uppercase\expandafter{\romannumeral1}: $\sigma=0.06$, $\ell_c=0.8$) and one rougher (case \uppercase\expandafter{\romannumeral2}: $\sigma=0.08$, $\ell_c=0.6$). \autoref{exam2} displays random realizations of these scatterers, along with comparisons between perturbed samples and the true geometry under both parameter regimes. In this part the wavenumber is still set to be $k=[3,4,5,6]$. For  case \uppercase\expandafter{\romannumeral1} we compare the inversion results using 50 and 100 samples; for the more irregular parameter case \uppercase\expandafter{\romannumeral2}, we compare the results using 200 and 500 samples instead. \autoref{example2}  displays the comparison between the reconstructed mean shape and the true geometry, together with the objective function's evolution over iterations.
\autoref{KLdi} presents the inversion outcomes of KL eigenvalues for distinct shapes under the two parameter regimes. A summary of the hyper-parameter statistics is also provided in \autoref{tab:hyperparameter_geometries}. Based on the present inversion results, the outcomes remain reasonably satisfactory.

   \begin{figure}
\begin{center}
    \begin{overpic}[width=0.9\textwidth, tics=10]{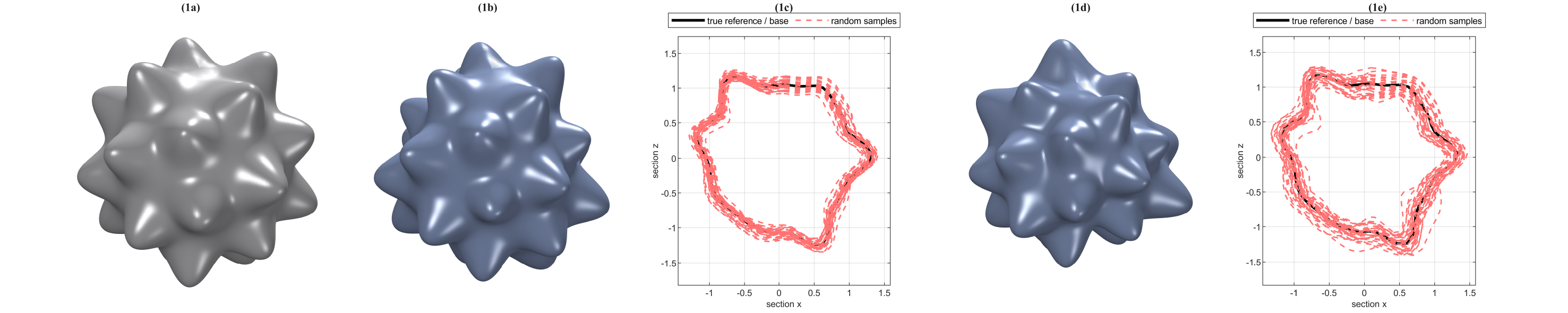}
    \end{overpic}
\end{center}
  \vspace{0.3cm}
\begin{center}
    \begin{overpic}[width=0.9\textwidth, tics=10]{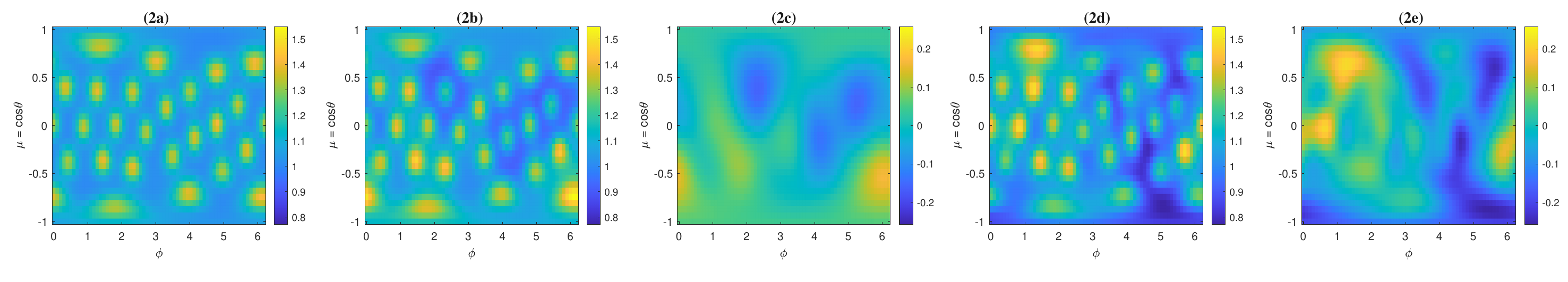}
    \end{overpic}
\end{center}
 \vspace{-0.5cm}
    \caption{\scriptsize Geometry visualization. (1a): reference/base scatterer.
  (1b): selected random scatterer under case \uppercase\expandafter{\romannumeral1}(\#10).
  (1c): section overlay under case  \uppercase\expandafter{\romannumeral1}.
  (1d): selected random scatterer under case  \uppercase\expandafter{\romannumeral2}(\#10).
  (1e): section overlay under case  \uppercase\expandafter{\romannumeral2}. (2a): reference/base radius.
(2b): selected random radius under case  \uppercase\expandafter{\romannumeral1}(\#10).
(2c): random perturbation under case  \uppercase\expandafter{\romannumeral1}.
(2d): selected random radius under case  \uppercase\expandafter{\romannumeral2}(\#10).
(2e): random perturbation under case  \uppercase\expandafter{\romannumeral2}.}
    \label{pic:complexshape}
      \end{figure}

\subsection{Example 3: role of the multi-frequency continuation strategy}
\label{sec:con}
Through this example, we demonstrate that the frequency-continuation strategy plays an important role in inverse scattering for random obstacles since it improves the recovery of both the obstacle geometry and the statistical quantities of the random field. Consider a more complex geometric  three-dimensional scatterer whose reference shape is depicted in \autoref{pic:complexshape}(a). \autoref{pic:complexshape}(c) and \autoref{pic:complexshape}(e) compare the randomly perturbed surfaces with the reference shape under the two parameter regimes. The inversion results obtained with the multi-frequency continuation framework are presented in \autoref{pic:complexshapere} where eight consecutive wavenumbers $(k = 1, 2, …, 8)$ are used for the inversion.  \autoref{fresingco} further illustrates how the reconstructed shape evolves with the frequency. As the frequency increases, finer geometric details are progressively recovered.

   \begin{figure}
\begin{center}
    \begin{overpic}[width=0.8\textwidth, tics=10]{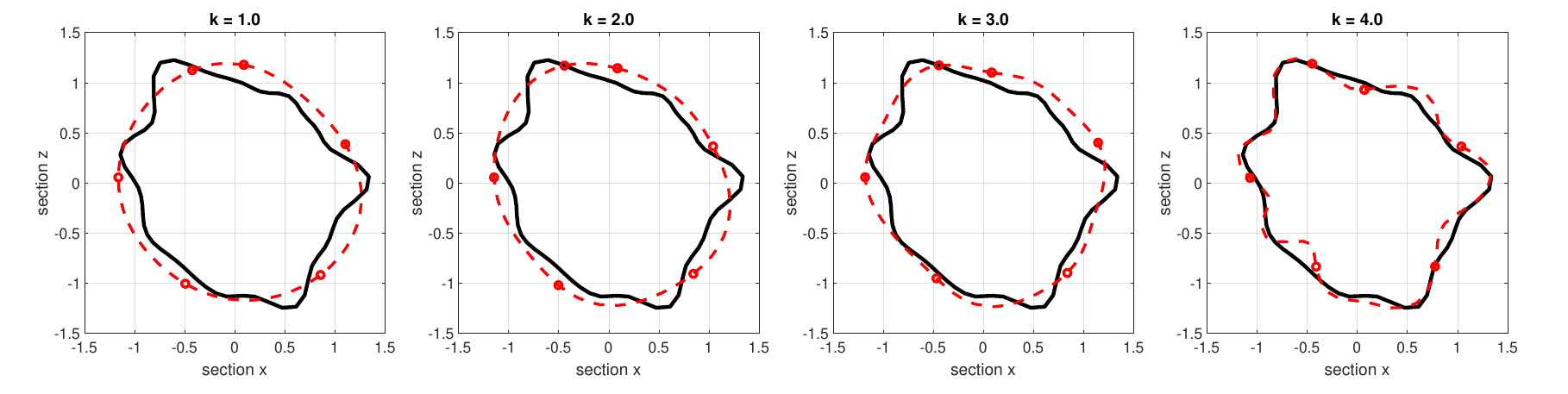}
    \end{overpic}
\end{center}
 \begin{center}
    \begin{overpic}[width=0.8\textwidth, tics=10]{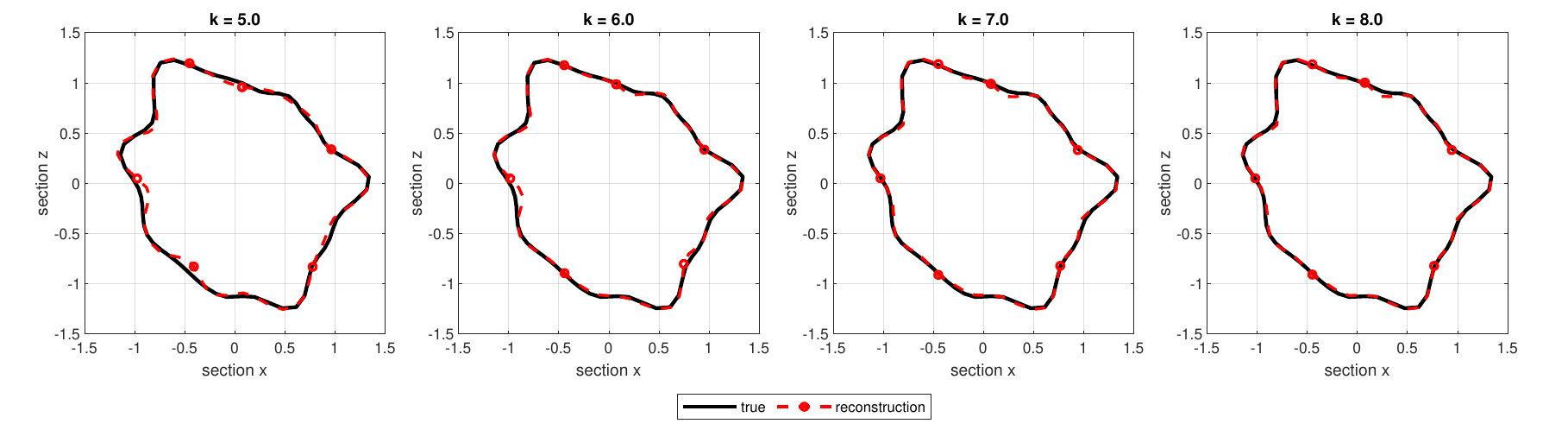}
    \end{overpic}
\end{center}
      \vspace{-0.4cm}
    \caption{\scriptsize Shape evolution during multi-frequency continuation.}
    \label{fresingco}
      \end{figure}

 \begin{figure}
    \begin{center}
    \begin{overpic}[width=0.9\textwidth, tics=10]{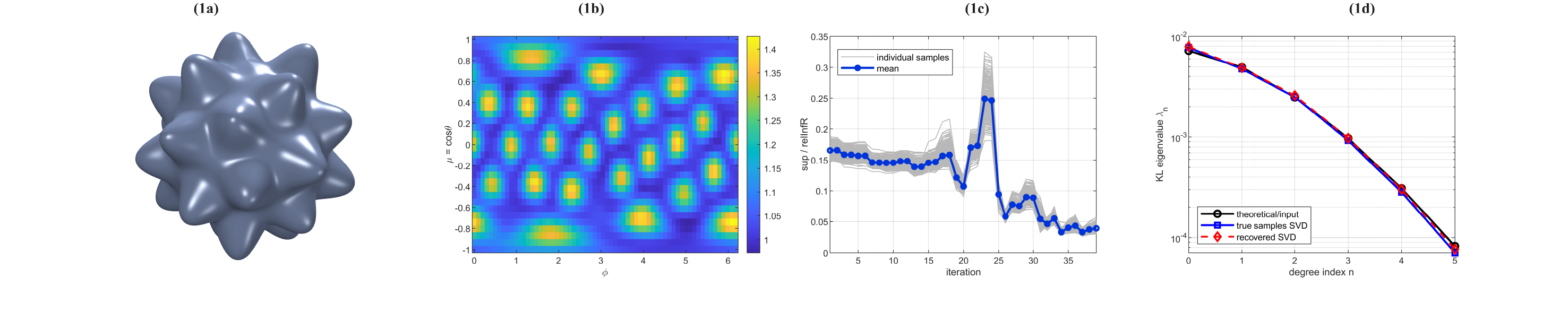}
    \end{overpic}
    \end{center}
    \vspace{-0.2cm}
    \begin{center}
        \begin{overpic}[width=0.9\textwidth, tics=10]{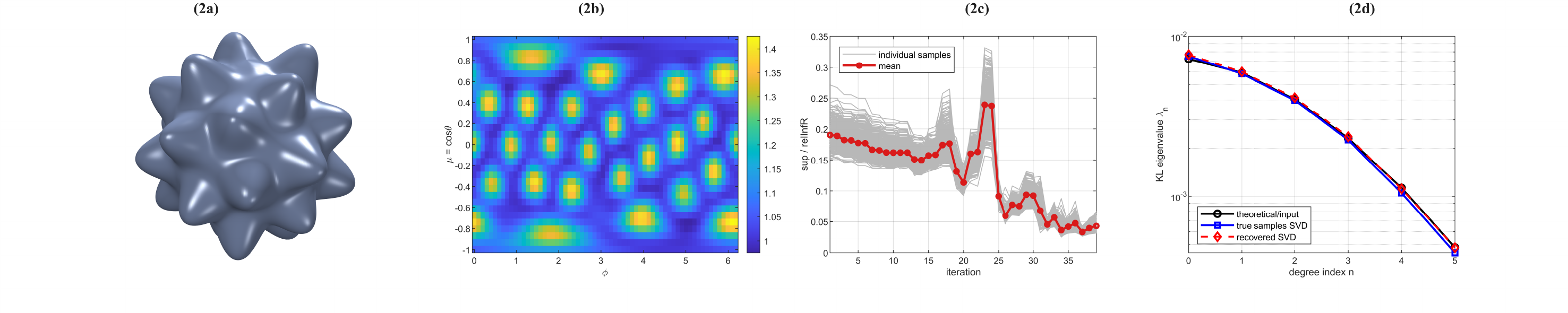}
      \end{overpic}
    \end{center}
    \vspace{-0.4cm}
    \caption{ \scriptsize Reconstructions results under multi-frequency scheme. (1a): mean recovered shape, $M$ = 300.
  (1b): mean recovered radius, $\sigma=0.06$, $\ell_c=0.8$.
  (1c): sup-norm radius error history; gray solid line: individual samples, colored dashed curve: mean.
  (1d):  KL eigenvalues comparison; black solid line: theoretical/input, blue solid line: true samples, red solid line: recovered.  (2a): mean recovered 3D shape, $M = 600$; 
(2b): mean recovered radius heatmap, $\sigma=0.08$, $\ell_c=0.6$; 
(2c): sup-norm radius error history; gray solid line: individual samples, colored dashed curve: mean; 
(2d):  KL eigenvalue comparison; black solid line: theoretical/input, blue solid line: true samples, red solid line: recovered.}
    \label{pic:complexshapere}
      \end{figure}

   \begin{table}[htbp]
\centering
\tiny
\renewcommand{\arraystretch}{1.18}
\setlength{\tabcolsep}{4pt}
\setlength{\heavyrulewidth}{1.05pt}
\setlength{\lightrulewidth}{0.55pt}
\setlength{\cmidrulewidth}{0.55pt}

\begin{tabularx}{\textwidth}{
ll
>{\centering\arraybackslash}p{1.7cm}
>{\centering\arraybackslash}p{1.5cm}
>{\centering\arraybackslash}X
>{\centering\arraybackslash}X
>{\centering\arraybackslash}X
}
\toprule
&& True(Preset)
& Reference
& \makecell{Estimated(20 samples)}
& \makecell{Estimated(50 samples)}
& \makecell{Estimated(100 samples)} \\
\midrule

\multirow{2}{*}{case \uppercase\expandafter{\romannumeral1}}
& $\sigma$  & 0.0600 & 0.0604 & 0.063 (95\%) & 0.0614 (98\%)
& \textbf{ 0.0604 (99\%)} \\
& $\ell_c$ & 0.80   & 0.79  & 0.96 (80\%)& 0.83 (96\%)  & \textbf{0.80  (100\%)} \\
\midrule

&& True(Preset)
& Reference
& \makecell{Estimated(200 samples)}
& \makecell{Estimated(400 samples)}
& \makecell{Estimated(600 samples)} \\
\midrule

\multirow{2}{*}{case \uppercase\expandafter{\romannumeral2}}
& $\sigma$  & 0.0800 & 0.0773 & 0.0798 (99\%) & 0.0792 (99\%)& \textbf{0.0789 (99\%)} \\
& $\ell_c$ & 0.60   & 0.61   & 0.63  (95\%) & 0.62 (97\%)  & \textbf{0.61 (99\%)} \\
\bottomrule
\end{tabularx}

\caption{\scriptsize  Comparison of estimated hyper-parameters under two settings with different sample sizes. True (Preset) denotes the ground-truth covariance parameters, while Reference denotes estimates recovered via inversion with the true geometric configuration.}
\label{tab:hyperparameter_estimates}
\end{table}











\begin{figure}
\centering

\begin{tikzpicture}


\node[inner sep=0pt, anchor=north west] (r1) at (0,0) {%
  \begin{overpic}[width=0.9\textwidth, tics=10]{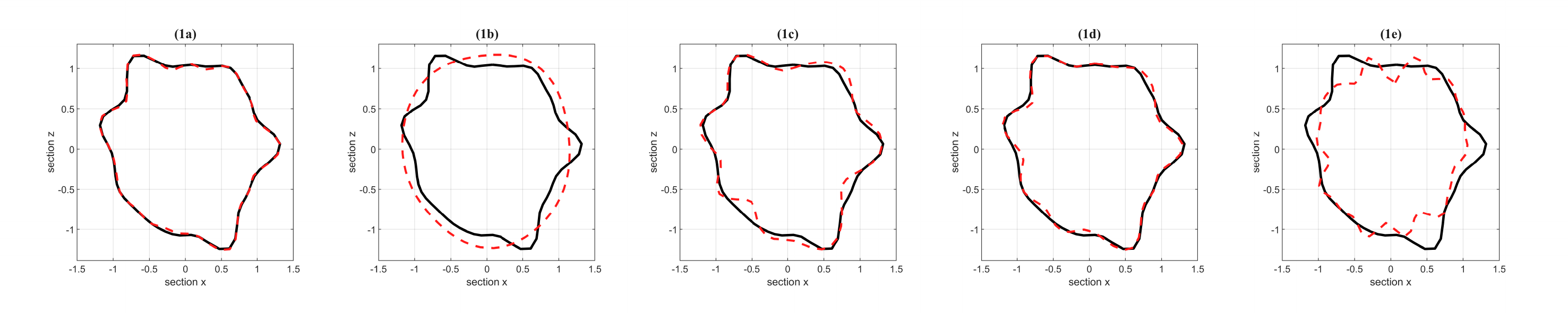}
  \end{overpic}
};

\node[inner sep=0pt, anchor=north west] (r2) at ($(r1.south west)+(0,-0.35cm)$) {%
  \begin{overpic}[width=0.9\textwidth, tics=10]{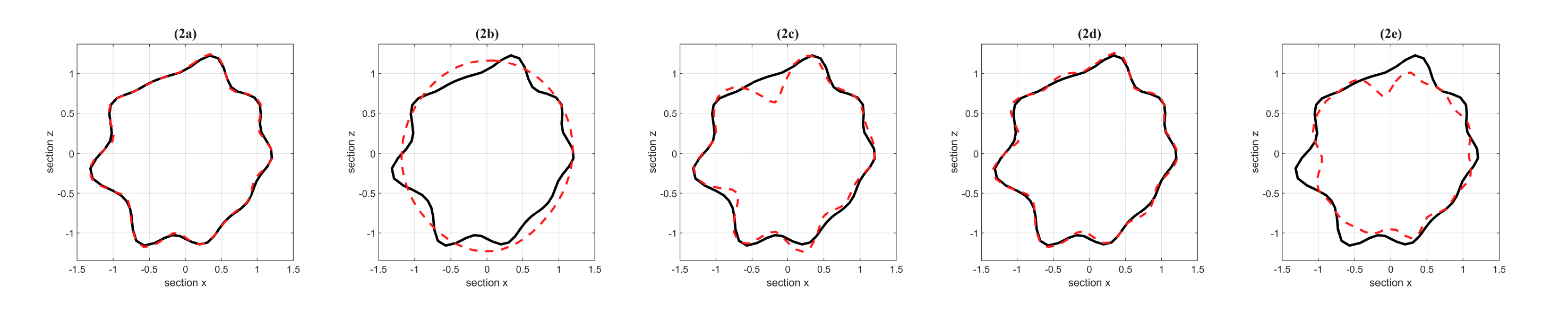}
  \end{overpic}
};

\node[inner sep=0pt, anchor=north west] (r3) at ($(r2.south west)+(0,-0.35cm)$) {%
  \begin{overpic}[width=0.9\textwidth, tics=10]{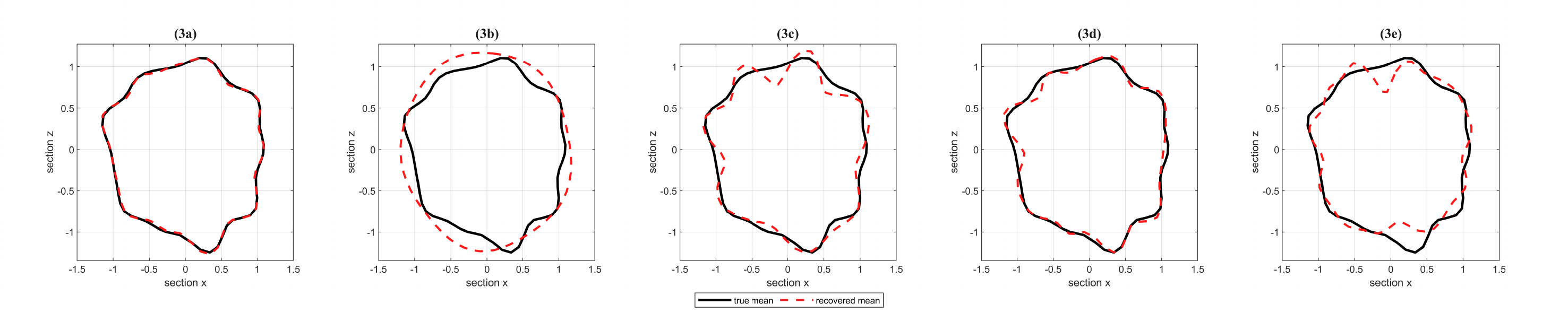}
  \end{overpic}
};


\pgfmathsetmacro{\xc}{0.110}
\pgfmathsetmacro{\xa}{\xc-0.088}
\pgfmathsetmacro{\xb}{\xc+0.090}

\draw[
  black,
  line width=0.65pt,
  dash pattern=on 3pt off 2pt,
  rounded corners=1.5pt
]
  ($(r1.north west)!\xa!(r1.north east)+(0,-0.04cm)$)
  rectangle
  ($(r3.south west)!\xb!(r3.south east)+(0,0.04cm)$);


\foreach \xc in {0.305,0.495,0.685,0.875}{
  \pgfmathsetmacro{\xa}{\xc-0.088}
  \pgfmathsetmacro{\xb}{\xc+0.090}

  \draw[
    black!30,
    line width=0.55pt,
    dash pattern=on 3pt off 2pt,
    rounded corners=1.5pt
  ]
    ($(r1.north west)!\xa!(r1.north east)+(0,-0.04cm)$)
    rectangle
    ($(r3.south west)!\xb!(r3.south east)+(0,0.04cm)$);
}

\end{tikzpicture}

\caption{\scriptsize Comparison of true mean vs recovered mean for multi-frequency and single-frequency ($k=2,4,6,8$) inversions under case \uppercase\expandafter{\romannumeral1}, each with 300 samples. (1a)--(3a): multi-frequency results, sections $\phi = 0, 1.0472, 2.0944$ respectively; (1b)--(3b): $k = 2$ results, sections $\phi = 0, 1.0472, 2.0944$ respectively; (1c)--(3c): $k = 4$ results, sections $\phi = 0, 1.0472, 2.0944$ respectively; (1d)--(3d): $k = 6$ results, sections $\phi = 0, 1.0472, 2.0944$ respectively; (1e)--(3e): $k = 8$ results, sections $\phi = 0, 1.0472, 2.0944$ respectively. Each dashed rectangle groups three cross-sectional comparisons of the same mean shape under one inversion setting.}
\label{frecom}

\end{figure}

\autoref{pic:complexshapere}(b) shows the sample-averaged reconstructed radius for case~\uppercase\expandafter{\romannumeral2} and the corresponding hyper-parameter estimates are reported in \autoref{tab:hyperparameter_estimates}. Compared with case~\uppercase\expandafter{\romannumeral1}, the larger fluctuation amplitude and shorter correlation length make the inversion more challenging, and a larger number of samples is needed to maintain reliable accuracy in the reconstructed mean shape, KL spectrum and covariance hyper-parameters.

\autoref{pic:complexshapere}(c) shows the evolution of the relative $L^\infty$ error during the iteration. Several transient increases can be observed, mainly caused by local shape adjustments when the frequency is switched or the regularization strength is reduced. Since the $L^\infty$ error is sensitive to maximum pointwise deviations, such local overshoots are amplified in the error curve. Nevertheless, these increases are temporary, and the overall error trend remains decreasing.

To assess the role of frequency continuation, we compare the proposed multi-frequency strategy with single-frequency inversions. Four individual wavenumbers ($k=2,4,6,8$) are tested separately, while all other parameters are kept the same as in the previous experiment. In each single-frequency test, the reconstruction procedure in \autoref{alg:MC-MFRLA-mean} is applied directly at the prescribed wavenumber without using lower-frequency reconstructions as initial guesses. The comparison is performed with 300 inversion samples under case~\uppercase\expandafter{\romannumeral1} and the cross-sectional profiles of the sample-averaged reconstructions are shown in \autoref{frecom}.

From \autoref{frecom}, the single-frequency reconstructions exhibit clear limitations compared with the multi-frequency continuation result. Low-frequency data provide relatively stable recovery of the global shape but fail to resolve fine geometric features, whereas high-frequency data contain more local information but are more sensitive to the initial guess, noise and instability. Consequently, the reconstructed mean shapes are less accurate than those obtained by the continuation strategy. This comparison shows that the multi-frequency continuation scheme provides a more reliable balance between stability and resolution for complex random scatterers.

 \begin{figure}
\begin{center}
    \begin{overpic}[width=0.8\textwidth, tics=10]{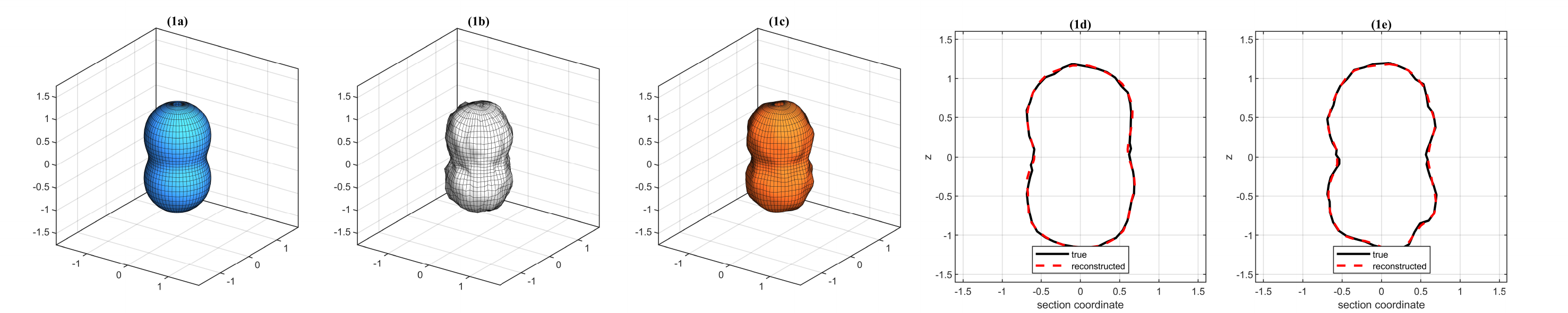}
    \end{overpic}
\end{center}
 \begin{center}
    \begin{overpic}[width=0.8\textwidth, tics=10]{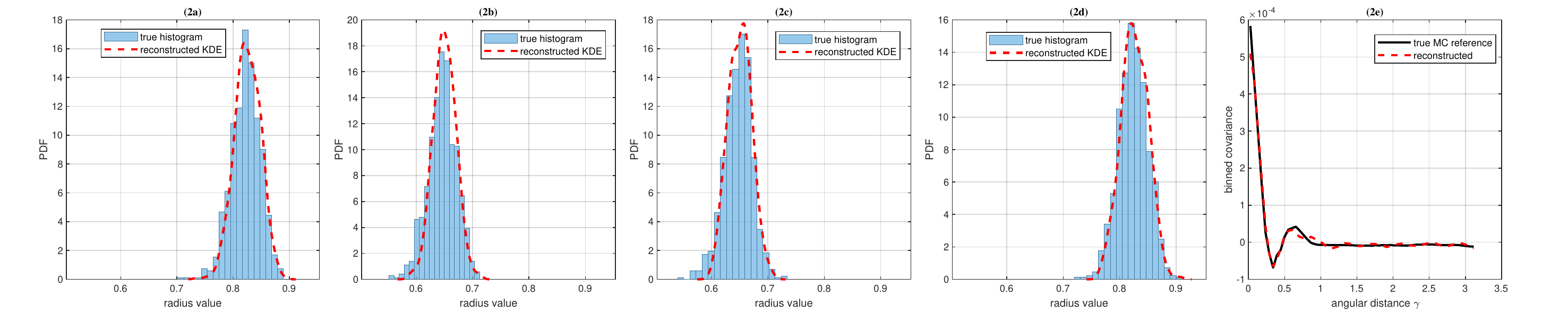}
    \end{overpic}
\end{center}
 \begin{center}
    \begin{overpic}[width=0.8\textwidth, tics=10]{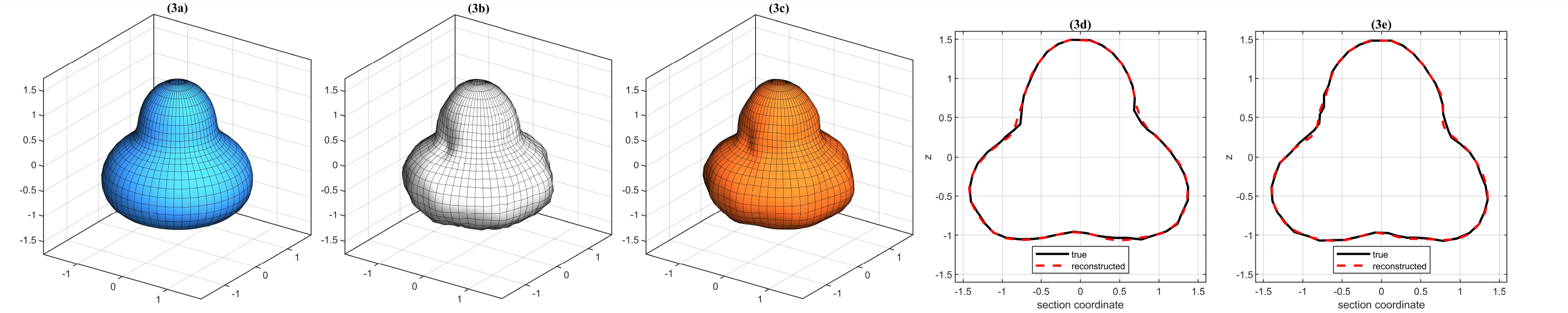}
    \end{overpic}
\end{center}
 \begin{center}
    \begin{overpic}[width=0.8\textwidth, tics=10]{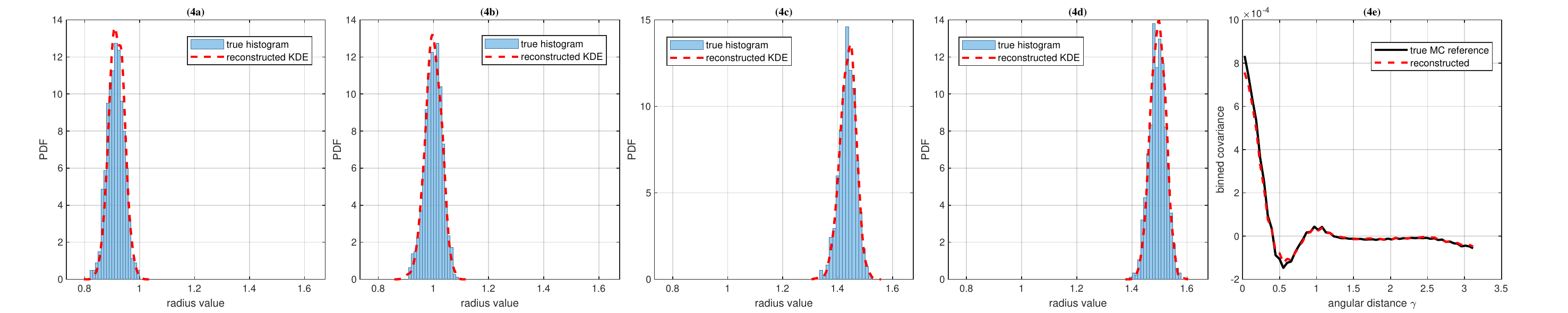}
    \end{overpic}
\end{center}
    \vspace{-0.3cm}
    \caption{\scriptsize Comparison of inversion results under a non-Gaussian random field. (1a)(3a): mean reconstructed shape, $M = 800$. (1b)(3b): true random sample$(\#1)$.
(1c)(3c): reconstructed sample$(\#1)$.
(1d)(3d): section comparison at $\phi = 0$; black solid line: true shape. red dashed line: reconstructed shape.
(1e)(3e): section comparison at $\phi = \pi/2$; black solid line: true shape. red dashed line: reconstructed shape.
(2a)(4a):  PDF of radius at grid $(\mu, \phi) = (0.57, 0.79)$; blue histogram: true radius samples. red dashed line: reconstructed radius KDE.
(2b)(4b): PDF of radius at grid $(\mu, \phi) = (0.27, 2.36)$; blue histogram: true radius samples. red dashed line:  reconstructed radius KDE.
(2c)(4c):  PDF of radius at grid $(\mu, \phi) = (-0.27, 3.93)$; blue histogram: true radius samples. red dashed line: reconstructed radius KDE.
(2d)(4d):  PDF of radius at grid $(\mu, \phi) = (-0.57, 5.50)$; blue histogram: true radius samples. red dashed line: reconstructed radius KDE.
(2e)(4e): covariance. black solid line: true reference covariance, red dashed line: reconstructed covariance.}
    \label{nonr}
      \end{figure}

      \begin{figure}
\begin{center}
    \begin{overpic}[width=0.8\textwidth, tics=10]{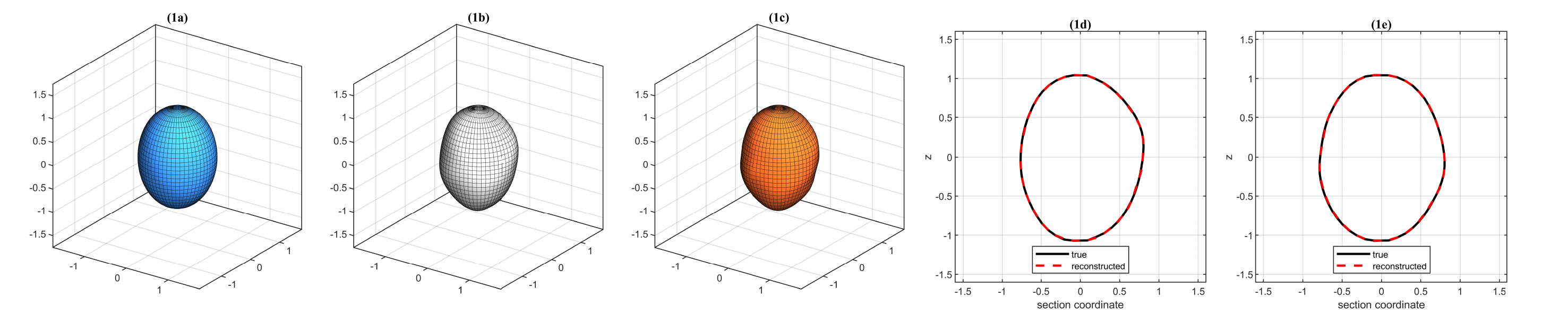}
    \end{overpic}
\end{center}
 \begin{center}
    \begin{overpic}[width=0.8\textwidth, tics=10]{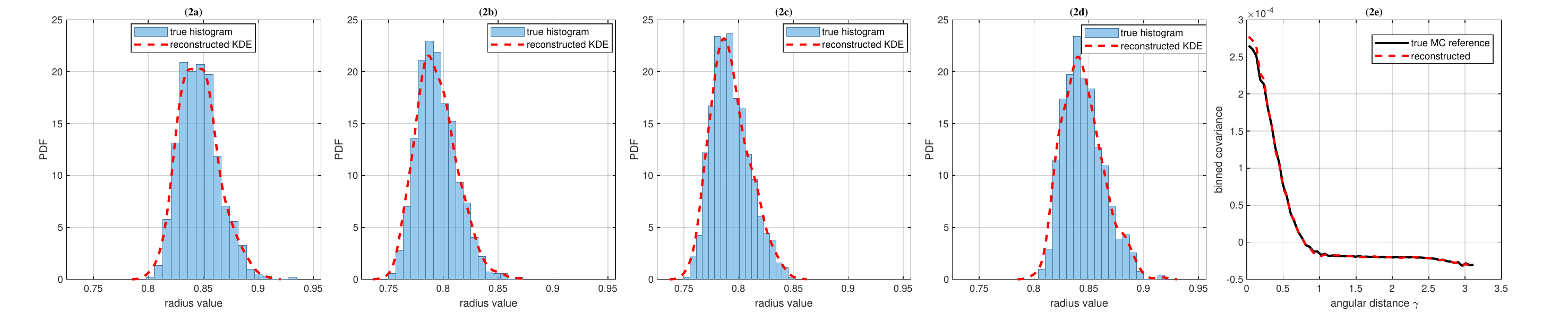}
    \end{overpic}
\end{center}
  \vspace{-0.3cm}
    \caption{\scriptsize Comparison of inversion results for a strongly non-Gaussian random field, characterized by spatial mean absolute skewness and excess kurtosis values of 0.6459 and 0.4725, respectively. (1a): mean reconstructed shape, $M = 800$. (1b): true random sample$(\#1)$.
(1c): reconstructed sample$(\#1)$.
(1d): section comparison at $\phi = 0$; black solid line: true shape. red dashed line: reconstructed shape.
(1e): section comparison at $\phi = \pi/2$; black solid line: true shape. red dashed line: reconstructed shape.
(2a):  PDF of radius at grid $(\mu, \phi) = (0.57, 0.79)$; blue histogram: true radius samples. red dashed line: reconstructed radius KDE.
(2b): PDF of radius at grid $(\mu, \phi) = (0.27, 2.36)$; blue histogram: true radius samples. red dashed line:  reconstructed radius KDE.
(2c):  PDF of radius at grid $(\mu, \phi) = (-0.27, 3.93)$; blue histogram: true radius samples. red dashed line: reconstructed radius KDE.
(2d):  PDF of radius at grid $(\mu, \phi) = (-0.57, 5.50)$; blue histogram: true radius samples. red dashed line: reconstructed radius KDE.
(2e): angular-binned covariance. black solid line: true reference covariance, red dashed line: reconstructed covariance.}
    \label{skeness}
      \end{figure}

      \begin{figure}
    \centering
       \includegraphics[width=\linewidth]{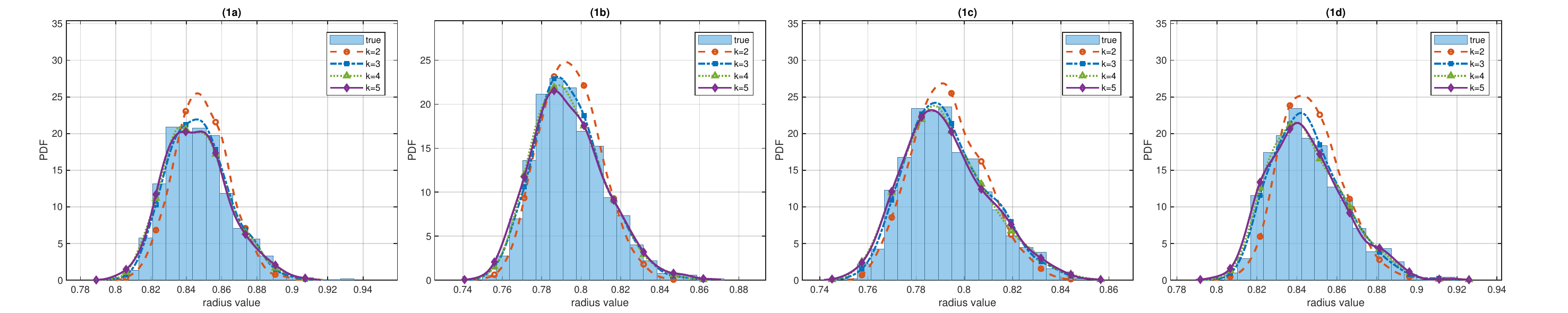}
        \caption{\scriptsize  Pointwise radius PDFs at four selected directions $(\mu,\phi)$=$(0.6,\pi/4)$, $(0.3,3\pi/4)$, $(-0.3,5\pi/4)$, $(-0.6,7\pi/4)$ for the non-Gaussian ellipsoidal case. The blue histograms and black solid curves show the true samples, and the colored dashed curves show the reconstructed KDEs for different end wavenumbers.}
        \label{ellppp}
    \end{figure}

\subsection{Example 4: non-Gaussian random field}

In this example, we consider random perturbations generated from non-Gaussian random fields. The samples are produced by a digital-filter method \cite{wu1988non} where spatial correlation is introduced through a spherical filter depending on the angular distance; thus, the construction is isotropic by design. In practice, independent non-Gaussian random variables are first generated on a discrete spherical grid and then filtered to obtain spatially correlated perturbations. Two types of non-Gaussian random fields are tested, each with 800 samples, and the inversion results are shown in \autoref{nonr}. The multi-frequency reconstruction is performed using four consecutive wavenumbers $k=2,3,4,5$. The angle-averaged covariance curves shown in \autoref{nonr}(2e) and \autoref{nonr}(4e) are computed by first estimating empirical covariances between pairs of surface directions and then averaging over all direction pairs with the same angular distance. The close agreement with the reference curve, shown as the black solid line, indicates that the reconstructed samples recover the individual shapes accurately and capture the spatial correlation features of the underlying non-Gaussian random field.

  \begin{figure}
    \centering
       \includegraphics[width=0.9\linewidth]{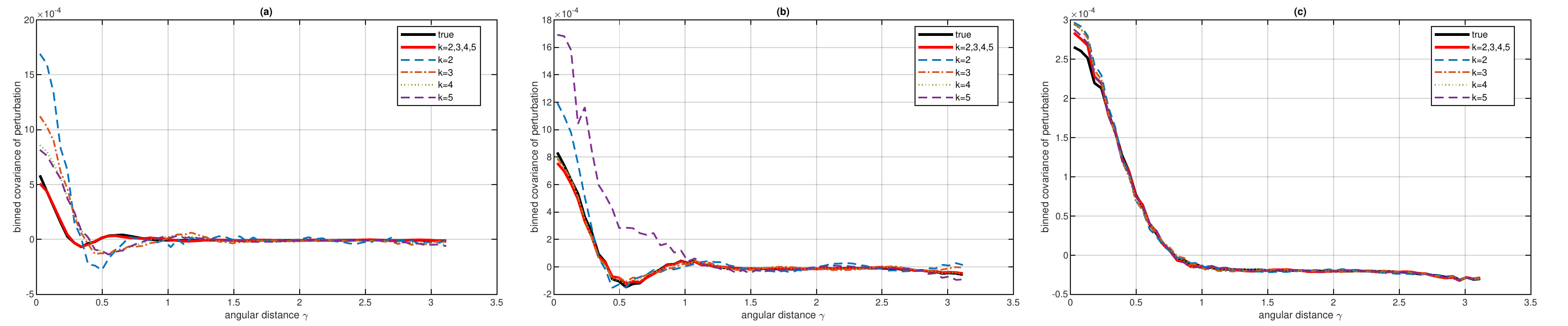}
       \caption{\scriptsize Comparison of  covariance functions reconstructed from multi-frequency and single-frequency far-field data for three non-Gaussian random scatterers. Panels (a)-(c) correspond to the peanut-shaped, pear-shaped, and vertical ellipsoidal scatterers, respectively. The black solid curves denote the reference covariance functions computed from the true samples. The red solid curves correspond to the multi-frequency reconstruction using $k=2,3,4,5$ while the dashed colored curves show the covariance estimates obtained from single-frequency reconstructions at $k=2,3,4,5$.}
        \label{ellk}
    \end{figure}

    To examine the recovery of local distributional features, four representative points are selected on the spherical parameter domain. At each point, the true and reconstructed perturbation values are collected over all samples, and kernel density estimation (KDE) is used to compare their one-dimensional marginal probability densities. The resulting density curves show that the reconstructed samples capture the main marginal distributional characteristics of the underlying non-Gaussian random field at the selected locations.

\begin{table}[htbp]
\centering
\tiny
\renewcommand{\arraystretch}{1.18}
\setlength{\tabcolsep}{4pt}
\setlength{\heavyrulewidth}{1.05pt}
\setlength{\lightrulewidth}{0.55pt}
\setlength{\cmidrulewidth}{0.55pt}

\begin{tabularx}{\textwidth}{
ll
>{\centering\arraybackslash}X
>{\centering\arraybackslash}X
>{\centering\arraybackslash}X
>{\centering\arraybackslash}X
>{\centering\arraybackslash}X
>{\centering\arraybackslash}X
>{\centering\arraybackslash}X
}
\toprule
Shape & Relative error
& $N=10$ & $N=50$ & $N=100$ & $N=200$ & $N=400$ & $N=600$ & $N=800$ \\
\midrule

\multirow{2}{*}{peanut}
& $E_{\rm mean}$ (\%)
& 0.90 & 0.54 & 0.46 & 0.43 & 0.41 & 0.41 & 0.40 \\
& $E_{\rm cov}$ (\%)
& 20.53 & 13.23 & 11.77 & 11.38 & 11.24 & 11.19 & 11.17 \\

\midrule

\multirow{2}{*}{pear}
& $E_{\rm mean}$ (\%)
& 0.75 & 0.41 & 0.34 & 0.29 & 0.27 & 0.26 & 0.26 \\
& $E_{\rm cov}$ (\%)
& 19.47 & 10.85 & 8.48 & 8.62 & 8.38 & 8.21 & 8.21 \\

\midrule

\multirow{2}{*}{vertical ellipsoid}
& $E_{\rm mean}$ (\%)
& 0.70 & 0.31 & 0.22 & 0.15 & 0.10 & 0.08 & 0.07 \\
& $E_{\rm cov}$ (\%)
& 18.35 & 8.53 & 6.95 & 6.46 & 5.38 & 5.13 & 5.01 \\

\bottomrule
\end{tabularx}

\caption{\scriptsize Sample-size dependence of the statistical reconstruction errors for three non-Gaussian random scatterers. Here $E_{\rm mean}$ denotes the relative $L^2$ error of the reconstructed sample mean radius, and $E_{\rm cov}$ denotes the relative $L^2$ error of the angle-averaged covariance function. All errors are reported in percentage form. For each sample size $N<800$, the values are averaged over 30 random subsamplings; for $N=800$, the full reconstructed sample set is used.}
\label{tab:sample_size_errors_three_shapes}
\end{table}

Next, we consider a more strongly non-Gaussian perturbation model. Its spatial mean absolute skewness and excess kurtosis are 0.6459 and 0.4725, respectively, substantially exceeding those of the previously considered peanut-shaped case (0.2193,0.1817) and pear-shaped case (0.1223,0.1679). This example examines the recovery of distributional features of the random radius beyond its mean geometry and covariance structure. The recovery results are shown in \autoref{skeness}. Despite the stronger non-Gaussianity, the proposed method provides stable reconstructions of the marginal radius distributions at the selected surface directions.

The effect of sample size is further investigated by computing the relative errors of the reconstructed sample mean and covariance function, as reported in \autoref{tab:sample_size_errors_three_shapes}. For all three non-Gaussian examples, the errors decrease rapidly for small sample sizes and then level off as the sample size becomes moderate. This indicates that a sufficiently large reconstructed samples can provide reliable estimates of both the mean geometry and the spatial correlation structure of the random shape fluctuation.








\begin{table}[htbp]
\centering
\tiny
\renewcommand{\arraystretch}{1.18}
\setlength{\tabcolsep}{3pt}
\setlength{\heavyrulewidth}{1.05pt}
\setlength{\lightrulewidth}{0.55pt}
\setlength{\cmidrulewidth}{0.55pt}

\newcolumntype{C}{>{\centering\arraybackslash}X}

\begin{tabularx}{\textwidth}{
>{\centering\arraybackslash}p{2.4cm}
C C C C C
}
\toprule
Shape
& \makecell{Single-frequency\\$k=2$}
& \makecell{Single-frequency\\$k=3$}
& \makecell{Single-frequency\\$k=4$}
& \makecell{Single-frequency\\$k=5$}
& \makecell{Multi-frequency\\$k=2,3,4,5$} \\
\midrule

peanut
& 2.6805
& 1.5601
& 1.1459
& 1.0934
& \textbf{0.1117} \\

pear
& 0.4724
& 0.1066
& 0.0637
& 1.5569
& \textbf{0.0821} \\

vertical ellipsoid
& 0.0981
& 0.0847
& 0.0819
& 0.0612
& \textbf{0.0501} \\

\bottomrule
\end{tabularx}

\caption{\scriptsize Relative $L^2$ errors of the covariance functions reconstructed from
single-frequency and multi-frequency far-field data for three non-Gaussian random
scatterers. The error is computed by
$\|C_{\rm rec}-C_{\rm true}\|_2/\|C_{\rm true}\|_2$. Smaller values indicate
better agreement with the reference covariance function.}
\label{tab:covariance_error_single_multi}
\end{table}

To further examine the role of frequency continuation in this more challenging non-Gaussian setting, we compare the marginal radius PDFs reconstructed with different terminal wavenumbers. The results are shown in \autoref{ellppp}, where the blue histograms represent the true empirical distributions and the colored dashed curves show the KDE-based density estimates from the reconstructed samples. As the terminal wavenumber increases, the reconstructed PDFs approach the true marginal distributions, indicating improved recovery of non-Gaussian distributional features.

We also compare the continuation strategy with single-frequency reconstructions at $k=2,3,4,5$. The reconstructed covariance functions are shown in \autoref{ellk}, and the corresponding relative errors are reported in \autoref{tab:covariance_error_single_multi}. Although a single frequency may occasionally yield a smaller error for a particular shape, such as $k=4$ for the pear-shaped scatterer, its performance is sensitive to the chosen wavenumber. In the same example, the error increases significantly when $k$ is changed to $5$. By contrast, the multi-frequency strategy provides more consistent reconstructions across different geometries and captures the spatial correlation structure of the random shape fluctuation with higher overall reliability.

\section{Concluding remarks}
\label{Conclusion}

In this work, we investigated an inverse acoustic scattering problem for three-dimensional random obstacles with uncertain shapes, extending the study of random obstacle scattering from two-dimensional settings to three dimensions. The uncertain scatterer was represented by a smooth radial function with small-amplitude random fluctuations, which allows both Gaussian and non-Gaussian random shape variations to be incorporated into a unified framework. With sufficiently many incident waves at a fixed frequency, we show that the distribution of the random radial function is uniquely determined by the distribution of the corresponding far-field data. Under the zero-mean perturbation model, this further implies the uniqueness of the deterministic reference shape and the associated second-order statistical quantities. On the computational side, a Monte Carlo-based multi-frequency recursive linearization method was developed to recover the samples of random scatterer realizations from far-field measurements: the nonlinear far-field mapping  is linearized with respect to the radial shape parameters at each iteration and the reconstruction is updated  along an increasing sequence of frequencies. Our reconstruction is not limited to deterministic shape recovery: the recovered samples were further used to estimate the mean geometry and key statistical features of the random perturbation field, including the covariance structure, KL eigenvalues, covariance hyper-parameters and representative marginal distributions. Numerical experiments for Gaussian and non-Gaussian perturbations demonstrate the effectiveness of the proposed method in recovering both geometric and statistical information of three-dimensional random scatterers from far-field data. To the best of our knowledge, this work is among one of the first to achieve quantitative numerical recovery of the statistical structure of three-dimensional random shape perturbations from far-field data.

For future work, a detailed analysis of the convergence and stability properties of the proposed recursive linearization algorithm in the three-dimensional random setting would be of significant interest. It would also be valuable to quantify the influence of sample size, noise level and frequency selection on the recovery of statistical quantities. From a theoretical perspective, extending the present uniqueness analysis to settings where only partial statistical information of the far-field data is available such as moment- or covariance-based data, is an interesting direction. Extensions to more general random shape models and more complex spatial correlation structures are also worth exploring, as they would further enhance the applicability of the proposed approach to inverse random  scattering problems with richer uncertainty structures.

\section*{Acknowledgments}
 We would like to thank the Center for High Performance Computing at Shanghai Jiao Tong University for the provision of Siyuan-1 cluster which supported the computational work conducted in this study.

 \section*{Data and Code Availability}
Both the research data and the source code used to conduct the numerical experiments and verify the conclusions of this study can be obtained from the corresponding author upon reasonable and justified request.



\bibliographystyle{unsrt}
\bibliography{IP_2_JCP}

\begin{thebibliography}{10}

\bibitem{colton1998inverse}
David Colton and Rainer Kress.
\newblock {\em Inverse Acoustic and Electromagnetic Scattering Theory}, volume~93 of {\em Applied Mathematical Sciences}.
\newblock Springer, Cham, Switzerland, 4th edition, 2019.

\bibitem{Bao2001MathematicalModeling}
G.~Bao, L.~Cowsar, and W.~Masters, editors.
\newblock {\em Mathematical Modeling in Optical Science}, volume~22 of {\em Frontiers in Applied Mathematics}.
\newblock SIAM, Philadelphia, PA, 2001.

\bibitem{Kuchment2014}
P.~Kuchment.
\newblock {\em The Radon Transform and Medical Imaging}, volume~85 of {\em CBMS-NSF Regional Conference Series in Applied Mathematics}.
\newblock SIAM, Philadelphia, PA, 2014.

\bibitem{wang2020review}
Junying Wang and Xinqian Zheng.
\newblock Review of geometric uncertainty quantification in gas turbines.
\newblock {\em J. Eng. Gas Turbines Power}, 142(7):070801, 2020.

\bibitem{MR3120587}
Gang Bao, Shui-Nee Chow, Peijun Li, and Haomin Zhou.
\newblock An inverse random source problem for the {H}elmholtz equation.
\newblock {\em Math. Comp.}, 83(285):215--233, 2014.

\bibitem{MR3565588}
Gang Bao, Chuchu Chen, and Peijun Li.
\newblock Inverse random source scattering problems in several dimensions.
\newblock {\em SIAM/ASA J. Uncertain. Quantif.}, 4(1):1263--1287, 2016.

\bibitem{MR4653392}
Peijun Li, Ying Liang, and Yuliang Wang.
\newblock A data-assisted two-stage method for the inverse random source problem.
\newblock {\em SIAM J. Imaging Sci.}, 16(4):1929--1952, 2023.

\bibitem{MR2386724}
Matti Lassas, Lassi P\"aiv\"arinta, and Eero Saksman.
\newblock Inverse scattering problem for a two dimensional random potential.
\newblock {\em Commun. Math. Phys.}, 279(3):669--703, 2008.

\bibitem{MR4712402}
Peijun Li and Xu~Wang.
\newblock Inverse scattering for the biharmonic wave equation with a random potential.
\newblock {\em SIAM J. Math. Anal.}, 56(2):1959--1995, 2024.

\bibitem{MR4918624}
Jianliang Li, Peijun Li, Xu~Wang, and Guanlin Yang.
\newblock Inverse random potential scattering for the polyharmonic wave equation using far-field patterns.
\newblock {\em SIAM J. Appl. Math.}, 85(3):1237--1260, 2025.

\bibitem{MR4164073}
Gang Bao, Yiwen Lin, and Xiang Xu.
\newblock Inverse scattering by a random periodic structure.
\newblock {\em SIAM J. Numer. Anal.}, 58(5):2934--2952, 2020.

\bibitem{MR4693214}
Hao Gu, Xiang Xu, and Liang Yan.
\newblock Inverse elastic scattering by random periodic structures.
\newblock {\em J. Comput. Phys.}, 501:112785, 2024.

\bibitem{MR5072888}
Yi~Wang, Lei Lin, and Junliang Lv.
\newblock Numerical method for the inverse scattering problem of acoustic-elastic interaction by random periodic structures.
\newblock {\em J. Comput. Phys.}, 562:115028, 2026.

\bibitem{sun2026inverse}
Zhiqi Sun, Xiang Xu, and Yiwen Lin.
\newblock Inverse acoustic scattering for random obstacles with multi-frequency data.
\newblock arXiv:2601.22560, 2026.

\bibitem{MR4337758}
J{\"u}rgen D{\"o}lz, Helmut Harbrecht, Carlos Jerez-Hanckes, and Michael Multerer.
\newblock Isogeometric multilevel quadrature for forward and inverse random acoustic scattering.
\newblock {\em Comput. Methods Appl. Mech. Eng.}, 388:114242, 2022.

\bibitem{MR883771}
David Colton and Peter Monk.
\newblock The numerical solution of the three-dimensional inverse scattering problem for time harmonic acoustic waves.
\newblock {\em SIAM J. Sci. Stat. Comput.}, 8(3):278--291, 1987.

\bibitem{MR1181580}
Rainer Kress and Axel Zinn.
\newblock On the numerical solution of the three-dimensional inverse obstacle scattering problem.
\newblock {\em J. Comput. Appl. Math.}, 42(1):49--61, 1992.

\bibitem{MR2363787}
Helmut Harbrecht and Thorsten Hohage.
\newblock Fast methods for three-dimensional inverse obstacle scattering problems.
\newblock {\em J. Integral Equations Appl.}, 19(3):237--260, 2007.

\bibitem{MR4929108}
Yunwen Yin and Liang Yan.
\newblock {TDDM}: A transfer learning framework for physics-guided {3D} acoustic scattering inversion.
\newblock {\em J. Comput. Phys.}, 539:114211, 2025.

\bibitem{MR729385}
David Colton and B.~D. Sleeman.
\newblock Uniqueness theorems for the inverse problem of acoustic scattering.
\newblock {\em IMA J. Appl. Math.}, 31(3):253--259, 1983.

\bibitem{MR4142771}
Paul Escapil-Inchausp\'e and Carlos Jerez-Hanckes.
\newblock Helmholtz scattering by random domains: first-order sparse boundary element approximation.
\newblock {\em SIAM J. Sci. Comput.}, 42(5):A2561--A2592, 2020.

\bibitem{Hadamard1923}
J.~Hadamard.
\newblock {\em Lectures on {Cauchy's} Problem in Linear Partial Differential Equations}.
\newblock Yale University Press, New Haven, 1923.

\bibitem{MR3404631}
Annika Lang and Christoph Schwab.
\newblock Isotropic {G}aussian random fields on the sphere: regularity, fast simulation and stochastic partial differential equations.
\newblock {\em Ann. Appl. Probab.}, 25(6):3047--3094, 2015.

\bibitem{marinucci2011random}
Domenico Marinucci and Giovanni Peccati.
\newblock {\em Random Fields on the Sphere: Representation, Limit Theorems and Cosmological Applications}, volume 389 of {\em London Mathematical Society Lecture Note Series}.
\newblock Cambridge University Press, Cambridge, 2011.

\bibitem{oksendal2013stochastic}
Bernt {\O}ksendal.
\newblock {\em Stochastic Differential Equations: An Introduction with Applications}.
\newblock Universitext. Springer, Heidelberg, 6th edition, 2003.

\bibitem{bao2015inverse}
Gang Bao, Peijun Li, Junshan Lin, and Faouzi Triki.
\newblock Inverse scattering problems with multi-frequencies.
\newblock {\em Inverse Probl.}, 31(9):093001, 2015.

\bibitem{MR4162000}
Carlos Borges and Jun Lai.
\newblock Inverse scattering reconstruction of a three-dimensional sound-soft axis-symmetric impenetrable object.
\newblock {\em Inverse Probl.}, 36(10):105005, 2020.

\bibitem{MR2998714}
Mourad Sini and Nguyen~Trung Th\`anh.
\newblock Inverse acoustic obstacle scattering problems using multifrequency measurements.
\newblock {\em Inverse Probl. Imaging}, 6(4):749--773, 2012.

\bibitem{MR1203018}
A.~Kirsch.
\newblock The domain derivative and two applications in inverse scattering theory.
\newblock {\em Inverse Probl.}, 9(1):81--96, 1993.

\bibitem{wkeglarczyk2018kernel}
Stanis{\l}aw W{\k{e}}glarczyk.
\newblock Kernel density estimation and its application.
\newblock {\em ITM Web Conf.}, 23:00037, 2018.

\bibitem{Kechris1995}
Alexander~S. Kechris.
\newblock {\em Classical Descriptive Set Theory}, volume 156 of {\em Graduate Texts in Mathematics}.
\newblock Springer, New York, 1995.

\bibitem{dai2013approximation}
Feng Dai and Yuan Xu.
\newblock {\em Approximation Theory and Harmonic Analysis on Spheres and Balls}.
\newblock Springer Monographs in Mathematics. Springer, New York, 2013.

\bibitem{MR3933278}
Ana Carpio, Thomas~G. Dimiduk, Fr\'ed\'erique Le~Lou\"er, and Mar\'ia~Luisa Rap\'un.
\newblock When topological derivatives met regularized {G}auss-{N}ewton iterations in holographic 3{D} imaging.
\newblock {\em J. Comput. Phys.}, 388:224--251, 2019.

\bibitem{wu1988non}
S.~C. Wu, M.~F. Chen, and A.~K. Fung.
\newblock {Non-Gaussian} surface generation.
\newblock {\em IEEE Trans. Geosci. Remote Sens.}, 26(6):885--888, 1988.

\end{thebibliography}

\end{document}